\theoremstyle{plain}
\newtheoremstyle{miestilo}{12pt}{\topsep}{\itshape}{}{\bf}{}{ }{}
\theoremstyle{miestilo}
\newtheorem{teorema}[subsection]{Theorem.}
\newtheorem{proposicion}[subsection]{Proposition.}
\newtheorem{lema}[subsection]{Lemma.}
\newtheorem{corolario}[subsection]{Corollary.}
\newtheoremstyle{misnotas}{12pt}{12pt}{}{}{\bf}{}{ }{\remark}
\theoremstyle{misnotas}
\newtheorem{remark}[subsection]{\ {\bf Remark.}}
\newtheorem{apartado}[subsection]{\ {\ }}
\begin{document}
\flushbottom
%%%%%%%%%%%%%%%%%%%%%%%%%%%%%%%%%%%%%%%%%%%%%%%%%%%%%%%%%%%%%%%%%%%%%%%%%%%%
%%%%%%%%%%%%%%%%%%%%%%%%%%%%%%%%%
%
%%%%%%%%%%%%%%%%%%                 LC ideal pares Jordan - febrero 2019 - PARTE I
%
%%%%%%%%%%%%%%%%%%%%%%%%%%%%%%%%%%%%%%%%%%%%%%%%%%%%%%%%%%%%%%%%%%%%%%%%%%%%
%%%%%%%%%%%%%%%%%%%%%%%%%%%%%%%%%

  {\LARGE\centerline{\bf The ideal of Lesieur-Croisot elements of Jordan pairs}}
 \medskip

\centerline{ Fernando Montaner \footnote{Partially supported   by grant MTM2017-83506-C2-1-P (AEI/FEDER, UE), and by
Diputaci\'on General de Arag\'on (Grupo de Investigaci\'{o}n
\'{A}lgebra y Geometr\'{\i}a).}} \centerline{{\sl Departamento de Matem\'{a}ticas,
Universidad de Zaragoza}} \centerline{{\sl 50009 Zaragoza,
Spain}} \centerline{E-mail: fmontane@unizar.es} \centerline{and}
\centerline{ Irene Paniello \footnote{Partially supported   by grant MTM2017-83506-C2-1-P (AEI/FEDER, UE).}} \centerline{{\sl
Departamento de Estad\'{\i}stica, Inform\'{a}tica y  Matem\'{a}ticas, Universidad
P\'{u}blica de Navarra}}
 \centerline{{\sl 31006 Pamplona, Spain}}
\centerline{E-mail: irene.paniello@unavarra.es}

 \medskip

\begin{abstract}
We study the sets of elements of Jordan pairs whose local Jordan algebras are Lesieur-Croisot algebras, that is, classical orders in nondegenerate Jordan algebras with finite capacity. It is then proved that, if the Jordan pair is nondegenerate, the set of its Lesieur-Croisot elements is an ideal of the Jordan pair.
\end{abstract}

\medskip
 \noindent{{\bf Keywords:} Jordan Pair, Local Algebra, Classical Order, Nonsigularity}
 \medskip
 \noindent{ {\bf MSC:} 17C10, 16W10}

\section*{Introduction}

Two obstacles to be overcome in order to embed a semiprime associative algebra into a semisimple artinian algebra are singularity and uniform dimension. Once we are able to ensure both nonsingularity and finite uniform dimension for a semiprime associative algebra, Goldie theory gives us the
existence of its classical algebra of quotients. Nevertheless  there is a number of conditions, equivalent to being  nonsingular and having finite uniform dimension,  that also imply that any semiprime associative algebra satisfying any of these equivalent conditions is an order in a  semisimple artinian algebra. One of these equivalent conditions, more precisely, that given in terms of the essentiality of one-sided ideals of semiprime Goldie algebras as   exactly those containing regular elements,  becomes particularly outstanding in the extension of Goldie's Theorems to Jordan algebras.

First Zelmanov \cite{z-gol1,z-gol2} for linear Jordan algebras and later Fernández López, García Rus and Montaner \cite{fgm} for quadratic Jordan algebras considered the extension of Goldie theory to Jordan algebras. In \cite{fgm}, besides characterizing  nodegenerate Jordan Goldie algebras as those having a nondegenerate artininan classical algebra of quotients,  the authors translated to the Jordan algebras the above mentioned equivalent characterizations of semiprime associative Goldie algebras. However it turned out that the previously mentioned condition  involving  essential one-sided ideals did not hold for Jordan algebras. Indeed, nondegenerate Jordan algebras whose essential inner ideals can be  characterized as those containing injective elements are not Goldie, since they cannot be always imbedded into nondegenerate artinian algebras, but into nondegenerate Jordan algebras having finite capacity. These algebras were named Lesieur-Croisot Jordan algebras by Montaner and Tocón in \cite{mt-lc}.

In \cite{mt-lc} the authors studied not only Jordan algebras having the property of being  Lesieur-Croisot, but also the set of elements of a nondegenerate Jordan algebra whose local algebras are  Lesieur-Croisot, thus introducing what was called in that reference a local Lesieur-Croisot theory of Jordan algebras.

Recall that since they were introduced by Meyberg in \cite{meyberg}, local algebras have been related to some of the most important problems arising  in Jordan theory since, as a result of a series of local-to-global inheritance procedures, local properties of   local algebras can be carried back to the original Jordan algebra, pair or triple system. For instance, the study of Jordan systems with nonzero local PI-algebras have been a recurrent topic appearing in Jordan theory since Zelmanov's work on pairs and triple systems.

In the associative pair setting Fernández López, García Rus, Gómez Lozano, and Siles Molina established in \cite{goldie pares} an analogue of  the classical Goldie theory for semiprime associative pairs, where orders are defined locally.  A similar local approach was followed by  Fernández López, García Rus, and Jaa in \cite{towards}, where the authors  introduced a notion of order in nondegenerate linear Jordan pairs  with descending chain condition on principal inner ideals. Later Fernández López, García Rus, and Montaner provided a first approach to a Goldie theory for quadratic Jordan pairs in their unpublished manuscript \cite{fgm_pares}.

Following this local approach, and taking into account \cite{mt-lc},   in this paper we study the set of elements of a Jordan pair which give rise to local algebras that are classical orders in nondegenerate Jordan algebras of finite capacity, i.e., local algebras which are Lesieur-Croisot   algebras. The main result we prove in the present paper is that this set is an ideal when the Jordan pair is nongenerate.

The paper is organized as follows. After this introduction, we devote the first section to basic notions and preliminaries. Then, in Section 2, we include   a series of results on the  set of elements of a  Jordan pair having essential annihilator. This set was proved to be an ideal of any nondegenerate Jordan pair by Fernández López, García Rus and Montaner   \cite{fgm_pares}. Besides formulating the above mentioned result, we also include some properties relating the singular ideal of a nondegenerate Jordan pair  to that  of any of its local algebras.

Section 3 is devoted to  Lesieur Croisot Jordan algebras. These Jordan algebras were characterized by Fernández López, García Rus and Montaner  in \cite{fgm}. We also recall here the notion of Lesieur Croisot element as introduced by Montaner and Toc\'{o}n in \cite{mt-lc}.

Sections 4 and 5 contain some results that will be instrumental later. More precisely, Section 4  focuses on associative pairs and their left and right singular ideals. Recall that associative pairs naturally arise as associative envelopes of special Jordan pairs, and therefore they are of great relevance in the study strongly prime Jordan pairs  of hermitian type. We revisit   some results introduced in \cite{goldie pares} and \cite{gs comm alg32}  focusing on the relationship between the left or right singular ideal of a semiprime associative pair and the ones of its local algebras. We also consider the relationship between the sets of elements of a semiprime associative pair having finite left or right  Goldie (or uniform) dimension and the annihilator of its left and right singular ideals.
Then, as a first approach to  study the set of Lesieur-Croisot elements of nondegenerate Jordan pairs we consider in Section 5 the set of PI-elements of strongly prime Jordan pairs of hermitian type. More precisely, we consider the relationship between the set of PI-elements of a strongly prime Jordan pair   of hermitian type and those of any of its $\ast$-tight associative pair envelopes, thus extending to Jordan pairs a result previously given by Montaner in \cite{pi-i} for Jordan algebras.

Finally, in the sixth and last section,  we address the prove that the set of LC-elements of a nondegenerate Jordan pair is an ideal.
Here we follow the pattern given in \cite{mt-lc}, and after   dealing with   strongly prime Jordan pairs having non-zero PI-elements,  we study the  strongly prime PI-less case. Though the  steps to follow are rather similar to those in \cite{mt-lc},   there are still some intermediate results on Jordan pairs that need to be proved since in \cite{mt-lc} the authors made use of a series of  facts  proved \cite{fgm} only for Jordan algebras. Finally, the extension from strongly prime to nondegenerate Jordan pairs yields in the decomposition of local algebras of  nondegenerate Jordan pairs at nonzero  LC-elements into essential subdirect sums of strongly prime Jordan algebras, as a consequence of the fact that  LC-elements of a nondegenerate Jordan pair are, in particular, semi-uniform elements.

%%%%%%%%%%%%%%%%%%%%%%%%%%%%%%%%%%%%%%%%%%%%%%%%%%%%%%%%%%%%%%%%%%%%%%%%%%%%
%%%%%%%%%%%%%%%%%%%%%%%%%%%%%%%%%
%
%%%%%%%%%%%%%%%%%%                  preliminares
%%%%%%%%%%%%%%%%%%%%%%%%%%
%

\section{Preliminaries}

\begin{apartado} We will work with  associative and Jordan   systems (algebras and pairs) over a unital commutative ring of scalars $\Phi$ which will be fixed throughout. We refer to \cite{loos-jp,meyberg,mcz} for notation, terminology and basic results. For us, Jordan system will mean a quadratic Jordan system. We will use identities for Jordan pairs proved in \cite{loos-jp}, which will be quoted with the labellings JPn. \end{apartado}

\begin{apartado} A {\it Jordan algebra} $J$ has products $U_xy$ and $x^2$, quadratic in $x$ and linear in $y$, whose linearizations are  $U_{x,z}y=V_{x,y}z=\{x,y,z\}=U_{x+z}y-U_xy-U_zy$ and $x\circ y=V_xy=(x+y)^2-x^2-y^2$. A {\it Jordan pair} $V=(V^+ ,V^{-  })$ has products $Q_xy$ for
$x\in V^\sigma $, $y\in V^{-\sigma }$, \, $\sigma =\pm$, with linearizations $Q_{x,z}y=D_{x,y}z=\{x,y,z\}=Q_{x+z}y-Q_xy-Q_zy$.  This  introductory section mainly focuses on Jordan pairs. The reader is referred to \cite[Section 0]{mp-localorders} for the corresponding basic preliminary results on Jordan algebras. \end{apartado}

\begin{apartado} An {\sl associative pair} is a pair $A=(A^+,A^-)$ of $\Phi$-modules  together with trilinear maps $  \  A^\sigma \times A^{-\sigma}  \times A^\sigma \ \to \ A^\sigma$, $\sigma=\pm$, defined by $(x,y,z)\  \mapsto \  xyz$ and such that  $ (xyz) uv =x(yzu)v =xy(zuv) $  for all $x,z,v\in A^\sigma$, $y,u\in A^{-\sigma}$, $\sigma=\pm$.  An {\sl (polarized) involution} $\ast$ in an associative pair $A$ consists of two linear maps $\ast : A^\sigma \to A^\sigma$ such that $(x^\ast)^\ast=x$ and $(xyz)^\ast=z^\ast y^\ast x^\ast$ for all $x,z\in A^\sigma$, $y \in A^{-\sigma}$, $\sigma=\pm$.\end{apartado}

\begin{apartado} Similarly to the algebra case, where every associative algebra $A$ gives rise to a Jordan algebra $A^{(+)}$ by taking $U_xy=xyx$ and $x^2=xx$ for $x,y\in A$, any associative pair $A=(A^+,A^-)$ provides a Jordan pair $A^{(+)}$ taking $Q_xy=xyx$ for all $x\in A^\sigma$, $y \in A^{-\sigma}$, $\sigma=\pm$. A Jordan system (algebra or pair) is {\sl  special} if it is isomorphic to a Jordan subsystem of $A^{(+)}$  for some associative system  $A$ and {\sl $i$-special } if it satisfies all identities satisfied by all special Jordan systems.\end{apartado}

\begin{apartado} If $A$ is an associative algebra with involution $\ast$, then $H(A,\ast)=\{a\in A\mid a^\ast=a\}$ is a Jordan subalgebra of $A^{(+)}$ and so are {\sl ample subspaces} $H_0(A,\ast)$ of symmetric elements of $A$ i.e. subspaces of $H(A,\ast)$ such that $\{a\}=a+a^\ast$, $aa^\ast$ and $aha^\ast$ are in $H_0(A,\ast)$ for all $a\in A$ and all $h\in  H_0 (A,\ast)$. If $A=(A^+,A^-)$ is an associative pair with polarized involution $\ast$, then $H(A,\ast)=\big(H(A^+,\ast),H(A^-,\ast)\big) $, where $H(A^\sigma,\ast)=\{a\in A^\sigma\mid a^\ast=a\}$ is a Jordan subpair of $A^{(+)}$. {\sl  Ample subspairs} $H_0(A,\ast)\subseteq H(A,\ast)$  of symmetric elements containing the traces  $\{a\}=a+a^\ast$ of elements of $A^\sigma$ and satisfying $a H_0(A^{-\sigma},\ast) a^\ast\subseteq H_0(A,\ast) $ for all $a\in A^\sigma$, $\sigma=\pm$ are also Jordan subpairs of $A^{(+)}$.
\end{apartado}

\begin{apartado}  A $\Phi$-submodule $K$ of $V^\sigma $ is an {\it inner ideal} of $V$ if $Q_kV^{-\sigma } \subseteq K$ for all $k\in K$.
 A $\Phi$-submodule $I=(I^+,I^-)$ is an {\it ideal} if $Q_{I^\sigma }V^{-\sigma }+Q_{V^\sigma }I^{-\sigma }+\{I^\sigma , V^{-\sigma }, V^\sigma \} \subseteq I^\sigma $ for $\sigma =\pm$.   An  ideal   of $V$ is  {\it essential}  if it intersects nontrivially any nonzero ideal of $V$. Essentiality of an inner  ideal  $ K$ of $V^\sigma $ implies that it hits nontrivially any other nonzero inner ideal of $V$, contained in $V^\sigma $.
 \end{apartado}

\begin{apartado}  A Jordan pair $V$ is {\sl nondegenerate} if $Q_xV^\sigma\neq0$ for any nonzero $x\in V^{-\sigma}$, $\sigma=\pm$, and {\sl prime} if $Q_IL\neq0$ for any nonzero ideals $I$ and $L$ of $V$. A Jordan pair is {\sl strongly prime } if $V$ is both nondegenerate and prime. A Jordan pair is {\sl semiprime} if $V$ has no  nonzero trivial ideals i.e. if $Q_II\neq 0$ for any nonzero ideal $I$   of $V$.
 \end{apartado}

\begin{apartado}\label{derived ideals}  The product $Q_IL$ of two ideals $I$ and $L$ of a Jordan pair $V$ is not an ideal of $V$, but a   semi-ideal. Recall that  a $\Phi$-submodule $I=(I^+,I^-)$  of $V$ is a   {\it semi-ideal}  if $Q_IV+D_{V,V}I+Q_VQ_VI\subseteq I$. The product $Q_IL$ of semi-ideals is again a semi-ideal    \cite{mc-prime-inh}.
For any semi-ideal $I$  we can form   a chain of semi-ideals (called {\sl intrinsic derived spaces}) given by  $ I^{\langle0\rangle}=I$, $I^{\langle1\rangle}=Q_II $
 and $I^{\langle n\rangle}=Q_{I^{\langle n-1\rangle}}I^{\langle n-1\rangle}$, for all $n\geq 1$.
On the other hand,  if $I$ and $ L$ are ideals of $V$, then so is   $I\ast L=Q_IL+ Q_VQ_IL$ and, in particular,   $I^{(1)}=I\ast I$ the {\it derived ideal } of $I$ and the {\sl nth derived ideals} $I^{(n)}=I^{(n-1)}\ast I^{(n-1)}$   for all $n\geq 1$.

We remark  that given a semi-ideal $I$ of $V$, then $I+Q_VI$ is an ideal of $V$ called the {\it hull of $I$} and denoted $I^{(1\rangle}$. Similarly  we can consider $I^{(n\rangle}=I^{\langle n \rangle}+Q_VI^{\langle n \rangle}$ which are ideals of $V$ for any semi-ideal $I$.
  For any ideal $I$ of $V$, it holds that $I^{  ( n+1 \rangle}\subseteq I^{\langle n \rangle} \subseteq I^{( n \rangle} \subseteq I^{( n )}   $    for any $n\geq 0$  \cite[p.~210-211]{da-hermitian}. Moreover the following conditions are equivalent for any ideal $I$ of $V$ and any $n\geq 1$:
  \begin{enumerate}
  \item[(i)] $V$ is semiprime that is $I^{\langle 1\rangle} = 0$ implies $I=0$ for any ideal $I$ of $I$.
  \item[(ii)] $I^{(n)}=0 \Rightarrow I=0$ for any ideal $I$ of $I$.
 \item[(iii)] $I^{\langle n \rangle}=0 \Rightarrow I=0$  for any ideal $I$ of $I$.
  \item[(iv)] $I^{( n \rangle}=0 \Rightarrow I=0$ for any ideal $I$ of $I$.
  \end{enumerate}
 \end{apartado}

\begin{apartado}\label{annihilators}
The {\it annihilator} of a  subset $X\subseteq V^\sigma  $ is the set $ann_V(X)$ of all $a\in V^{-\sigma }$ such that $Q_ax=Q_xa=0$,
$Q_aQ_xV^{-\sigma }=D_{a,x}V^{-\sigma }=0$ and $Q_xQ_aV^\sigma=D_{x,a}V^\sigma =0$ for all $x\in X$.   Annihilators are always inner
ideals.  If $I=(I^+,I^-)$ is an ideal of $V$, then $ann_V(I)=(ann_V(I^-),ann_V(I^+))$ is again an ideal of $V$. If $V$ is nondegenerate, the
annihilator of an ideal $I$ of $V$ is characterized as follows (see \cite[Proposition 1.7(i)]{mc-prime-inh} and \cite[Lemma 1.3]{pi-ii}):
$$ann_V(I^\sigma )=\{a\in V^{-\sigma }\mid Q_aI^\sigma =0\}=\{a\in V^{-\sigma }\mid Q_{I^\sigma}a=0\}.$$ An ideal of a nondegenerate Jordan pair is essential if and only if it has zero annihilator.
\end{apartado}

\begin{apartado}  Let $V$ be a  special Jordan pair. An associative pair with a (polarized) involution $(A,\ast)$ is an {\it associative $\ast$-envelope} of $V$  if $A$ is generated by $V\subseteq H(A, \ast)$, and it is said to be {\it $\ast$-tight over $V$} if any nonzero $\ast$-ideal $I$ of $A$ hits $V$ nontrivially: $I\cap V\neq0$ for any nonzero $\ast$-ideal $I$ of $A$.
 \end{apartado}

\begin{apartado}  An $i$-special Jordan pair $V$ is {\it of hermitian type} if its hermitian parts are not annihilated, i.e., $ann_V(\sum_{\cal H}{\cal H}(V))=\bigcap_{\cal H}\  ann_V({\cal H}(V))=0$. Clearly  a prime Jordan pair is of hermitian type if and only if ${\cal H}(V)\neq0$
for some hermitian ideal ${\cal H}(X)$ of the free special Jordan pair $FSJP[X]$.
 \end{apartado}

\begin{apartado}  Given a Jordan pair $V=(V^+,V^-)$ and an element $a\in V^{-\sigma}$ the $\Phi$-module $V^\sigma$ becomes a Jordan algebra denoted by $(V^\sigma)^{(a)}$ and called the $a$-{\it homotope} of $V$ defining $U_x^{(a)}y=Q_xQ_ay$ and $x^{(2,a)}=Q_xa$ for all $x,y\in V^\sigma$. The set $Ker_V(a)=Ker \, a=\{x\in V^\sigma\mid Q_ax=Q_aQ_xa=0\}$ is an ideal of  $(V^\sigma)^{(a)}$ and the quotient $V_a^\sigma=  (V^\sigma)^{(a)}/Ker\,a$ is a Jordan algebra called the {\it local algebra  of $V$ at $a$}. If $V$ is nondegenerate, then $ Ker \, a=\{x\in V^\sigma\mid Q_ax= 0\}$.
\end{apartado}

\begin{apartado}\label{socle}   The socle $Soc(V)$ of a Jordan pair was studied by Loos in \cite{loos-socle}. The socle of a nondegenerate Jordan pair $V$ is the sum of all minimal inner ideals of $V$, it is a direct sum of simple ideals of $V$, consists of von Neumann regular elements, and satisfies the dcc on principal inner ideals \cite{loos-finiteness}. The elements of the socle of a nondegenerate Jordan pair are exactly those whose local algebras have finite capacity \cite[Lemma 0.7(b)]{pi-i}. A characterization of the elements of the socle of a nondegenerate Jordan pair in terms of the descending chain condition of principal ideals can be found in \cite[Theorem 1]{loos-socle}.
\end{apartado}

\begin{apartado}
An element $a$ of a Jordan pair  $V$ is a PI-{\it element} if the local  of $V$  at $a$ is a PI-algebra. If $V$ is a Jordan pair, we write $PI(V)=(PI(V^+),PI(V^-))$ where $PI(V^\sigma)$ is the set of PI-elements of $V^\sigma$, $\sigma=\pm$. It was proved in  \cite[Theorem 5.4]{pi-i} that the set of  PI-elements of a nondegenerate Jordan pair   is an ideal. Jordan pair with $PI(V)=0$ are called {\sl PI-less} Jordan pairs. PI-less strongly prime Jordan pairs are special of hermitian type.\end{apartado}

\section{The singular ideal of a Jordan pair}

In this section we recall the definition of the singular set for Jordan pairs, which is analogous to that given in \cite{fgm} for Jordan algebras. The contents of this section correspond to an unpublished work by  the first author together with Fernández López and García Rus \cite{fgm_pares}. The main result we provide here is that the set of elements of a Jordan pair having essential annihilator form an ideal when the Jordan pair is nondegenerate.

\begin{apartado}   The {\it singular set} of a Jordan pair $V$ is $\Theta(V)=( \Theta(V^+), \Theta(V^-))$, where $$ \Theta(V^\sigma) =\{x\in V^\sigma\mid ann_V(x)\ \hbox{is essential}\}.$$ A Jordan pair $V$ will be called {\it nonsingular} if $\Theta(V)=0$. If $\Theta(V)=V$, then the Jordan pair $V$ is   called {\it singular}.

We note  that since for all $x,y\in V^\sigma$, $ann_V(x)\cap ann_V(y)\subseteq ann_V(x+y)$ and $ann_V(x)\subseteq ann_V(Q_xV^{-\sigma})$, $\Theta(V)$ consists of a pair of inner ideals of $V$.
 \end{apartado}

\begin{proposicion}\label{JP ideal singular prop} {\rm \cite[Proposition 3.1, Propostion 3.2]{fgm_pares}}  Let $V$ be a nondegenerate Jordan pair and $b\in V^{-\sigma}$.
\begin{enumerate}
\item[(i)] $ (\Theta(V^\sigma)+Ker\, b)/Ker\, b$ is an inner ideal of $V_b^\sigma$ contained in
  $  \Theta(V^\sigma_b)$.
\item[(ii)]  If $\overline{x}\in \Theta(V_b^\sigma)$, then
  $Q_bx\in \Theta(V^{-\sigma})$.
\end{enumerate}
\end{proposicion}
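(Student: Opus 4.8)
The plan is to treat both statements through a single device: the passage between inner ideals of $V$ lying in a fixed part and inner ideals of the local algebra $V_b^\sigma$, implemented on the one hand by the quotient map $(V^\sigma)^{(b)}\to V_b^\sigma$ and on the other by the operator $Q_b\colon V^\sigma\to V^{-\sigma}$. The first thing I would record is the elementary fact that images of inner ideals descend to the local algebra: if $K\subseteq V^\sigma$ is an inner ideal of $V$, then, since $U_{\bar k}^{(b)}V_b^\sigma=\overline{Q_kQ_bV^\sigma}\subseteq\overline{Q_kV^{-\sigma}}\subseteq\bar K$, the image $\bar K=(K+Ker\,b)/Ker\,b$ is an inner ideal of $V_b^\sigma$. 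Applying this to the inner ideal $\Theta(V^\sigma)$ gives at once the inner-ideal assertion of (i); it remains to prove the two essentiality statements. Throughout, the homotope formulas $U_u^{(b)}v=Q_uQ_bv$ and $\{u,v,w\}^{(b)}=\{u,Q_bv,w\}$, the fundamental formula $Q_{Q_by}=Q_bQ_yQ_b$, and the description $Ker\,b=\{u\in V^\sigma\mid Q_bu=0\}$ valid by nondegeneracy will be the main tools.

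For the containment in (i), let $x\in\Theta(V^\sigma)$ and let $\bar L$ be an arbitrary nonzero inner ideal of $V_b^\sigma$; I would take $L\supseteq Ker\,b$ to be its full preimage, an inner ideal of the homotope $(V^\sigma)^{(b)}$, and form $Q_bL\subseteq V^{-\sigma}$. From $Q_{Q_b\ell}V^\sigma=Q_b(Q_\ell Q_bV^\sigma)\subseteq Q_bL$ one checks that $Q_bL$ is an inner ideal of $V$ contained in $V^{-\sigma}$, and it is nonzero because $\bar L\neq0$. Essentiality of $ann_V(x)$ then yields a nonzero $Q_b\ell\in ann_V(x)\cap Q_bL$ with $\ell\in L$. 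The element $\bar\ell\in\bar L$ is nonzero (as $Q_b\ell\neq0$), and I would verify $\bar\ell\in ann_{V_b^\sigma}(\bar x)$ by translating the defining relations of $Q_b\ell\in ann_V(x)$ through the fundamental formula: for instance $U_{\bar x}^{(b)}\bar\ell=\overline{Q_xQ_b\ell}=0$ comes from $Q_x(Q_b\ell)=0$, while $U_{\bar\ell}^{(b)}\bar x=\overline{Q_\ell Q_bx}=0$ comes from $Q_bQ_\ell Q_bx=Q_{Q_b\ell}x=0$; the operator and bracket conditions follow in the same manner from the remaining relations $Q_xQ_{Q_b\ell}V^\sigma=0$, $Q_{Q_b\ell}Q_xV^{-\sigma}=0$, $D_{x,Q_b\ell}V^\sigma=0$ and $D_{Q_b\ell,x}V^{-\sigma}=0$, together with the homotope identities. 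Hence $ann_{V_b^\sigma}(\bar x)$ meets every nonzero inner ideal of $V_b^\sigma$, so $\bar x\in\Theta(V_b^\sigma)$.

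For (ii) I would argue in the opposite direction. Let $\bar x\in\Theta(V_b^\sigma)$ and let $K\subseteq V^\sigma$ be a nonzero inner ideal of $V$; the goal is $K\cap ann_V(Q_bx)\neq0$, where $Q_bx\in V^{-\sigma}$ and $ann_V(Q_bx)\subseteq V^\sigma$. When $\bar K\neq0$, essentiality of $ann_{V_b^\sigma}(\bar x)$ provides a nonzero $\bar m\in ann_{V_b^\sigma}(\bar x)\cap\bar K$ with $m\in K$; from $\overline{Q_xQ_bm}=0$ and the fundamental formula I get $Q_{Q_bx}m=Q_bQ_xQ_bm=0$. The key point is that $\overline{Q_mQ_bx}=0$ gives only $Q_bQ_mQ_bx=0$, i.e. $Q_mQ_bx\in Ker\,b$, and not genuine annihilation. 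To repair this I would pass to $m':=Q_mQ_bx$, which still lies in the inner ideal $K$; then $Q_{Q_bx}m'=Q_bQ_xQ_b(Q_mQ_bx)=Q_bQ_x(Q_bQ_mQ_bx)=0$ and $Q_{m'}Q_bx=Q_mQ_{Q_bx}m'=0$, so $m'$ satisfies the principal annihilator conditions for $Q_bx$. If $m'\neq0$ it is the required element of $K\cap ann_V(Q_bx)$; if $m'=0$ then $m$ itself annihilates $Q_bx$ and serves instead. The remaining case $\bar K=0$, i.e. $K\subseteq Ker\,b$, is handled by the same substitution, using $Q_bk=0$ directly (whence $Q_{Q_bx}k=Q_bQ_xQ_bk=0$) for $0\neq k\in K$.

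I expect the main obstacle to be exactly the discrepancy flagged in (ii): membership in $ann_{V_b^\sigma}(\bar x)$ only encodes annihilation modulo $Ker\,b$, whereas membership in $ann_V(Q_bx)$ demands genuine annihilation in $V$. Nondegeneracy, through $Ker\,b=\{u\mid Q_bu=0\}$, together with the substitution $m\mapsto Q_mQ_bx$, bridges this gap for the two principal $Q$-conditions; the subsidiary operator and bracket conditions then have to be checked as well, which I would do by the same fundamental-formula manipulations and, where convenient, the simplified description of annihilators available in nondegenerate Jordan pairs. It is this verification of the full set of annihilator conditions, rather than any single structural idea, where the real work of the proof lies.
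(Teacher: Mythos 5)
You cannot be checked against the paper's own argument here, because the paper deliberately omits it: Proposition \ref{JP ideal singular prop} is quoted from the unpublished manuscript \cite{fgm_pares}, with only the remark that the proofs closely follow the Jordan algebra case in \cite{fgm}. Judged on its own terms, your part (i) is correct: $\Theta(V^\sigma)$ is an inner ideal, images of inner ideals of $V$ descend to $V^\sigma_b$, $Q_bL$ is an inner ideal of $V$ by the fundamental formula $Q_{Q_b\ell}=Q_bQ_\ell Q_b$ together with $U^{(b)}_\ell V^\sigma\subseteq L$, and the six annihilator conditions of $Q_b\ell\in ann_V(x)$ do translate into those of $\bar\ell\in ann_{V^\sigma_b}(\bar x)$ via the homotope formulas and identities such as $Q_bD_{\ell,Q_bx}=D_{Q_b\ell,x}Q_b$. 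In (ii), the substitution $m'=Q_mQ_bx$ is also sound in the branch $m'\neq 0$, and in fact does more than you claim: writing $a=Q_bx$, from the two genuine facts $Q_am=0$ and $Q_bm'=0$ \emph{all} annihilator conditions for $m'$ follow mechanically, e.g. $Q_{m'}Q_aV^\sigma=Q_m(Q_aQ_mQ_a)V^\sigma=Q_mQ_{Q_am}V^\sigma=0$, $Q_aQ_{m'}V^{-\sigma}=Q_{Q_am}Q_mV^{-\sigma}=0$, and $D_{m',a}=D_{Q_ma,a}=D_{m,Q_am}=0$ by the pair version of $V_{U_xy,y}=V_{x,U_yx}$.

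The genuine gap is the fallback clause ``if $m'=0$ then $m$ itself annihilates $Q_bx$'', together with its repetition in the case $K\subseteq Ker\,b$. At that point you know exactly $Q_mQ_bx=0$, $Q_{Q_bx}m=0$, and the conditions inherited from $\bar m\in ann_{V^\sigma_b}(\bar x)$, which are localized at $b$: they yield, for instance, $D_{Q_bx,m}Q_bV^\sigma=0$, not $D_{Q_bx,m}V^{-\sigma}=0$. The two principal $Q$-conditions do \emph{not} characterize the annihilator of an element in a nondegenerate pair; the simplified description in \ref{annihilators}, which you invoke ``where convenient'', holds for ideals only, and this seems to be the root of the unjustified step. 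Concretely, in the simple nondegenerate pair $V=\bigl(M_{1,2}(F),M_{2,1}(F)\bigr)$ take $u=(1,0)$ and $a=\bigl(\begin{smallmatrix}0\\ 1\end{smallmatrix}\bigr)$; then $Q_ua=Q_au=0$ and $\Phi u$ is an inner ideal every element of which satisfies both principal conditions with respect to $a$, yet $D_{u,a}(0,1)=(1,0)\neq 0$, so $\Phi u\cap ann_V(a)=0$. Hence nothing you have established forces $m\in ann_V(Q_bx)$ when $m'=0$, nor $k\in ann_V(Q_bx)$ when $k\in K\subseteq Ker\,b$ with $Q_kQ_bx=0$; that case --- precisely the crux you yourself flagged --- needs a genuine further argument, for example one exploiting the essentiality of $ann_{V^\sigma_b}(\bar x)$ through the whole nonzero inner ideal $\bar K\cap ann_{V^\sigma_b}(\bar x)$ rather than a single element, and as written part (ii) is incomplete.
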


\begin{teorema}\label{ideal singular} {\rm \cite[Theorem 3.3]{fgm_pares}} Let $V$ be a nondegenerate Jordan pair. Then $\Theta(V)$ is an ideal of $V$, called the singular ideal of $V$.
\end{teorema}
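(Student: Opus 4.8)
The plan is to verify directly the three inclusions that define an ideal. Since, as observed when $\Theta(V)$ was introduced, $\Theta(V)=(\Theta(V^+),\Theta(V^-))$ is already a pair of inner ideals, the inclusion $Q_{\Theta(V^\sigma)}V^{-\sigma}\subseteq\Theta(V^\sigma)$ holds for free, and it remains to establish, for each $\sigma=\pm$, the two outer closures
\[ Q_{V^\sigma}\Theta(V^{-\sigma})\subseteq\Theta(V^\sigma)\qquad\text{and}\qquad \{V^\sigma,V^{-\sigma},\Theta(V^\sigma)\}\subseteq\Theta(V^\sigma). \]
As the two signs are symmetric, I would fix $\sigma$ throughout.

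For the first closure I would push elements through local algebras by means of Proposition \ref{JP ideal singular prop}. Given $z\in\Theta(V^{-\sigma})$ and $b\in V^\sigma$, part (i) of that proposition, applied to the $(-\sigma)$-component with base point $b\in V^{\sigma}$, gives $\overline{z}\in\Theta(V_b^{-\sigma})$; then part (ii) yields $Q_b z\in\Theta(V^\sigma)$. Since $b$ and $z$ are arbitrary, this proves $Q_{V^\sigma}\Theta(V^{-\sigma})\subseteq\Theta(V^\sigma)$ at once.

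The closure under the triple product is where the difficulty lies. Writing $t=\{x,y,w\}$ with $w\in\Theta(V^\sigma)$, $x\in V^\sigma$, $y\in V^{-\sigma}$, I must show that $ann_V(t)$ is essential, knowing that $ann_V(w)$ is. The two annihilators are linked through the Jordan pair identities: from the commutator identity
\[ [D_{x,y},D_{w,a}]=D_{\{x,y,w\},a}-D_{w,D_{y,x}a} \]
together with $D_{w,a}=0$ for $a\in ann_V(w)$, one obtains $D_{t,a}=D_{w,D_{y,x}a}$ on $V^\sigma$. In particular $D_{t,a}=0$ whenever $a$ also annihilates $x$ (so that $D_{y,x}a=0$), and a routine verification of the remaining annihilator conditions gives $ann_V(w)\cap ann_V(x)\subseteq ann_V(t)$. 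The obstacle is that $ann_V(x)$ need not be essential, so this inclusion by itself does not deliver essentiality of $ann_V(t)$.

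To circumvent this I would pass to the local algebra at $y$. As $V$ is nondegenerate, $V_y^\sigma$ is a nondegenerate Jordan algebra, and by the algebra version of the present result \cite{fgm} its singular set $\Theta(V_y^\sigma)$ is an ideal. By Proposition \ref{JP ideal singular prop}(i) we have $\overline{w}\in\Theta(V_y^\sigma)$, and since $t=\{x,y,w\}$ is exactly the value at $\overline{x},\overline{w}$ of the circle product of $V_y^\sigma$, idealness forces $\overline{t}\in\Theta(V_y^\sigma)$. The remaining, and most delicate, step is to \emph{descend} this local singularity of $t$ to global singularity of $t$ in $V$: here one must combine the essentiality of $ann_{V_y^\sigma}(\overline{t})$ with that of $ann_V(w)$, using the control that Proposition \ref{JP ideal singular prop}(ii) gives over $Q_y t$, to show that $ann_V(t)$ meets every nonzero inner ideal of $V^{-\sigma}$. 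Carrying out this essentiality bookkeeping is the crux of the argument; once it is done for $\sigma=\pm$, both outer closures hold and $\Theta(V)$ is an ideal, reproving \cite[Theorem 3.3]{fgm_pares}.
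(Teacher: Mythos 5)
Your reduction to the three closures is the right frame, and two of them are correctly settled: $\Theta(V)$ is a pair of inner ideals by the remark preceding Proposition \ref{JP ideal singular prop}, so $Q_{\Theta(V^\sigma)}V^{-\sigma}\subseteq\Theta(V^\sigma)$ is free, and your derivation of $Q_{V^\sigma}\Theta(V^{-\sigma})\subseteq\Theta(V^\sigma)$ by applying Proposition \ref{JP ideal singular prop}(i) at the base point $b\in V^\sigma$ and then part (ii) is exactly the intended use of that proposition (the paper itself prints no proof of the theorem, deferring to \cite[Theorem 3.3]{fgm_pares}, so Proposition \ref{JP ideal singular prop} is the toolkit against which your attempt must be measured). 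Note in passing that linearizing the $Q$-closure also gives $\{V^\sigma,\Theta(V^{-\sigma}),V^\sigma\}\subseteq\Theta(V^\sigma)$ at no cost, since $\Theta(V^\sigma)$ is a $\Phi$-submodule; but that puts the singular element in the \emph{middle} slot, which is not what the ideal condition asks for.

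The genuine gap is the closure $\{\Theta(V^\sigma),V^{-\sigma},V^\sigma\}\subseteq\Theta(V^\sigma)$, which you yourself identify as the crux and then leave as unexecuted ``essentiality bookkeeping.'' Worse, the route you sketch cannot close as stated. Passing to $V_y^\sigma$ and getting $\overline{t}=\overline{x}\circ\overline{w}\in\Theta(V_y^\sigma)$ from the algebra theorem of \cite{fgm} is legitimate ($V_y^\sigma$ is nondegenerate), but the only descent mechanism available, Proposition \ref{JP ideal singular prop}(ii), converts local singularity of $\overline{t}$ into global singularity of $Q_yt$, never of $t$ itself. Singularity of $\overline{t}$ at the single base point $y$ occurring in $t=\{x,y,w\}$ carries almost no information about $t$: it holds vacuously whenever $t\in Ker\,y$, with $t$ arbitrary otherwise. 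There is also a sidedness obstruction in the proposed bookkeeping: $ann_{V_y^\sigma}(\overline{t})$ consists of cosets in $V^\sigma/Ker\,y$, whereas $ann_V(t)\subseteq V^{-\sigma}$; transporting its elements, say by $\overline{u}\mapsto Q_yu$, is essentially re-running the proof of part (ii) and again only yields control of $Q_yt$, not of $t$. This is precisely where pairs diverge from algebras: in \cite{fgm} the analogous linear closure $a\circ x\in\Theta(J)$ comes free from $U$-closure in the unital hull via $a\circ x=U_{a+1}x-U_ax-x$, and a Jordan pair has no unit, so absorbing $\{x,y,w\}$ into $\Theta(V^\sigma)$ requires a genuinely new essentiality argument (for instance, one exhibiting an essential inner ideal inside $ann_V(\{x,y,w\})$ built from $ann_V(w)$ by $(K:a)$-type constructions). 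Your JP12 computation and the containment $ann_V(w)\cap ann_V(x)\subseteq ann_V(t)$ are fine as far as they go, but, as you concede, they do not deliver essentiality; until the missing step is supplied, the theorem is not proved.
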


Next we bring together, for further use and citation, different results involving the singular ideal $\Theta(V)$ of a nondegenerate Jordan pair $V$. Even though proofs are not included (but given in the unpublished reference \cite{fgm_pares}) all them closely follow  to those of the analogous results for nondegenerate Jordan algebras, which can be found   in \cite{fgm}.

\begin{corolario}\label{corolario ideal singular} Let $V$ be a nondegenerate Jordan pair. Then:
 \begin{enumerate}
  \item[(i)] Every $b\in V^{-\sigma}$ such that $V_b^\sigma$ is nonsingular is contained in $ann_V(\Theta(V^\sigma))$.
 \item[(ii)] $V$ is nonsingular if and only if for every   $b\in V^{-\sigma}$, the local algebra $V^\sigma_b$ of $V$ at $b$ is nonsingular.
\item[(iii)]  If $V$ is strongly prime and $V_b^\sigma$ is nonsingular for a nonzero $b\in V^{-\sigma}$, then $V$ is nonsingular.
 \item[(iv)] $\Theta(V)\cap PI(V)=0$. In particular, if $V$ is strongly prime and $PI(V)\neq0$ then $V$ is nonsingular.
     \item[(v)] For any ideal $I$   of $V$,   $\Theta(V)\cap I=\Theta(I)$. Thus, if $I$ is essential,
     $V$ is nonsingular if and only if $I$ is nonsingular.
         \item[(vi)]  $\Theta(V)$ does not contain nonzero von Neumann regular elements. Thus, any nondegenerate Jordan pair with essential socle is nonsingular.
  \end{enumerate}
\end{corolario}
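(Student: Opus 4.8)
The plan is to derive all six items from the two results already in hand---Proposition~\ref{JP ideal singular prop}, which moves singular elements back and forth between $V$ and its local algebras, and Theorem~\ref{ideal singular}, that $\Theta(V)$ is an ideal---together with the annihilator description of \ref{annihilators}: for nondegenerate $V$ and an ideal $I$, $ann_V(I^\sigma)=\{a\mid Q_aI^\sigma=0\}$, and an ideal is essential exactly when its annihilator vanishes. For (i), if $V_b^\sigma$ is nonsingular then Proposition~\ref{JP ideal singular prop}(i) gives $(\Theta(V^\sigma)+Ker\,b)/Ker\,b\subseteq\Theta(V_b^\sigma)=0$, so $\Theta(V^\sigma)\subseteq Ker\,b=\{x\mid Q_bx=0\}$; thus $Q_b\Theta(V^\sigma)=0$ and the annihilator description yields $b\in ann_V(\Theta(V^\sigma))$. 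For (ii), the substantive implication uses Proposition~\ref{JP ideal singular prop}(ii): if $V$ is nonsingular and $\bar x\in\Theta(V_b^\sigma)$, then $Q_bx\in\Theta(V^{-\sigma})=0$, so $\bar x=0$ and $V_b^\sigma$ is nonsingular. Conversely, if every $V_b^\sigma$ is nonsingular, (i) shows $ann_V(\Theta(V^\sigma))=V^{-\sigma}$ for both $\sigma$; for $x\in\Theta(V^\sigma)$ this forces $Q_xV^{-\sigma}=0$, hence $x=0$ by nondegeneracy.

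Parts (iii) and (iv) bring in primeness. For (iii), part (i) places the nonzero $b$ in $ann_V(\Theta(V^\sigma))$, so the ideal $ann_V(\Theta(V))$ is nonzero; since $Q_{ann_V(\Theta(V))}\Theta(V)=0$ straight from the definition of annihilator, primeness forces $\Theta(V)=0$. The core of (iv) is $\Theta(V)\cap PI(V)=0$. Write $I=\Theta(V)\cap PI(V)$, an ideal of $V$. Every $a\in I$ is a PI-element of $V$, and as $I_a^\sigma$ embeds as a subalgebra of the PI-algebra $V_a^\sigma$, it is again PI, so $a$ is a PI-element of $I$ and $I$ is a nondegenerate PI Jordan pair. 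Invoking the Jordan algebra fact from \cite{fgm} that nondegenerate PI algebras are nonsingular, every local algebra of $I$ is nonsingular, so $I$ is nonsingular by (ii); but (v) gives $\Theta(I)=\Theta(V)\cap I=I$, whence $I=0$. The ``in particular'' statement then follows because in a prime pair two nonzero ideals intersect nontrivially (their $Q$-product lies in the intersection), so $PI(V)\neq0$ together with $\Theta(V)\cap PI(V)=0$ forces $\Theta(V)=0$.

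I expect (v) to be the main obstacle, since it requires transferring essentiality of inner ideals between $I$ and $V$. The point of departure is $ann_I(x)=ann_V(x)\cap I$ for $x\in I^\sigma$. For the inclusion $\Theta(V)\cap I\subseteq\Theta(I)$, I would note that a nonzero inner ideal $B$ of $I$ in $I^{-\sigma}$ contains, for any nonzero $b\in B$, the set $Q_bI^\sigma$, which is an honest inner ideal of $V$ by the fundamental formula $Q_{Q_bw}V^\sigma=Q_bQ_wQ_bV^\sigma\subseteq Q_bI^\sigma$ and is nonzero by nondegeneracy of $I$; essentiality of $ann_V(x)$ then makes it meet $Q_bI^\sigma\subseteq B$, so $ann_I(x)$ is essential in $I$. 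For the reverse inclusion I would take a nonzero inner ideal $C$ of $V$ in $V^{-\sigma}$ and argue by cases: if $C\cap I\neq0$ it is a nonzero inner ideal of $I$ met by $ann_I(x)$; and if $C\cap I=0$ then $Q_CI^\sigma\subseteq C\cap I=0$ places $C\subseteq ann_V(I^\sigma)\subseteq ann_V(x)$. The final clause is immediate: since $\Theta(V)\cap I=\Theta(I)$ and an essential ideal meets every nonzero ideal, nonsingularity of $V$ and of $I$ are equivalent when $I$ is essential.

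For (vi), let $x\in\Theta(V^\sigma)$ be von Neumann regular and choose $y$ with $Q_xy=x$; set $b=Q_yx$. The fundamental formula gives $Q_xb=x$ and $Q_bx=b$, so $(x,b)$ is an idempotent and $b\neq0$. Now $Q_bV^\sigma$ is a nonzero inner ideal of $V^{-\sigma}$ on which $Q_b$ inverts $Q_x$, because $Q_bQ_xQ_b=Q_{Q_bx}=Q_b$; hence any $z=Q_bw$ with $Q_xz=0$ satisfies $z=Q_bQ_xz=0$, giving $ann_V(x)\cap Q_bV^\sigma=0$ and contradicting essentiality unless $x=0$. Since by \ref{socle} the socle consists of von Neumann regular elements, $\Theta(V)\cap Soc(V)=0$; when $Soc(V)$ is essential this forces the ideal $\Theta(V)$ to vanish, so $V$ is nonsingular.
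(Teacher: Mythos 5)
Your proposal is correct, but a direct comparison with ``the paper's own proof'' is somewhat moot: the paper proves nothing here, its proof being a list of citations to the unpublished manuscript \cite{fgm_pares}, with the remark that those proofs closely follow the Jordan-algebra arguments of \cite{fgm}. What you have written is precisely such a pair-translation of the \cite{fgm} arguments: (i)--(iii) from Proposition~\ref{JP ideal singular prop} together with the annihilator characterization of \ref{annihilators} (legitimate, since Theorem~\ref{ideal singular} makes $\Theta(V)$ an ideal before the corollary is stated); (v) by the two-case essentiality transfer through the inner ideals $Q_bI^\sigma$ (an inner ideal of $V$ by the fundamental formula, nonzero by nondegeneracy of the ideal $I$) and $C\cap I$, with the case $C\cap I=0$ handled via $Q_CI^\sigma\subseteq C\cap I=0$ and the nondegenerate description of $ann_V(I^\sigma)$; and (vi) by manufacturing the idempotent $(x,b)$, $b=Q_yx$, and using $Q_bQ_xQ_b=Q_{Q_bx}=Q_b$ to show $ann_V(x)\cap Q_bV^\sigma=0$. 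So in substance your route coincides with the one the paper outsources, and all the computations I checked ($Q_xb=x$, $Q_bx=b$, the innerness of $Q_bI^\sigma$, the $Q$-product argument in (iii) and in the ``in particular'' of (iv)) are right.

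Two places where you assert rather than prove, both true but deserving a citation or a line of argument. First, the equality $ann_I(x)=ann_V(x)\cap I$: the inclusion you actually need in case 1 of (v), namely $ann_I(x)\subseteq ann_V(x)$, is the nontrivial direction; it does hold in nondegenerate pairs (for instance, for $a\in ann_I(x)$ and $w\in V^{-\sigma}$ one has $Q_{Q_aQ_xw}V^\sigma=Q_aQ_xQ_w(Q_xQ_aV^\sigma)\subseteq Q_aQ_xI^{-\sigma}=0$, whence $Q_aQ_xV^{-\sigma}=0$ by nondegeneracy, and similarly for the remaining conditions; cf. \cite{mc-prime-inh} and \cite{pi-ii}), but as stated it is a genuine lemma, not a definition. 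Second, in (iv) you import from \cite{fgm} that nondegenerate PI Jordan algebras are nonsingular; this is available there and its use at the level of the local algebras $I^\sigma_a$ (which are nondegenerate, with $Ker_I(a)=Ker_V(a)\cap I^\sigma$) is exactly the local-to-global device this paper relies on, so there is no circularity --- though note your (iv) also invokes (v), which you prove later but independently of (iv); a reordering or an explicit remark would tidy the write-up.
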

\begin{proof} (i) corresponds to \cite[Remark 3.5]{fgm_pares}, (ii) and (iii) are \cite[Corollary 3.6(i)-(iii)]{fgm_pares}, then \cite[Corollary 3.7]{fgm_pares} and \cite[Proposition 3.8]{fgm_pares} give (iv) and (v).  Finally (vi) follows from \cite[Proposition 3.10]{fgm_pares}
 and \cite[Corollary 3.11]{fgm_pares}.
\end{proof}

\section{Lesieur-Croisot Jordan algebras}

This section contains some of the main basic definitions and results relative to Goldie theory for quadratic Jordan algebras as well as the notion of Lesieur Croisot Jordan algebra. We refer the reader to \cite{fgm} and \cite{mt-lc,lc-pr} for more accurate results on this theory.

\begin{apartado}\label{denominadores-JP} Let $\widetilde{J}$ be a Jordan algebra,  $J$  be  a subalgebra of $\widetilde{J}$, and   $\widetilde{a}\in\widetilde{J}$. We recall from \cite{pi-ii} that an element $x\in J$ is a \emph{$J$-denominator} of
$\widetilde{a}$ if the following multiplications take $\widetilde{a}$ back into $J$:

 \centerline{ \begin{tabular}{lll}
   % after \\: \hline or \cline{col1-col2} \cline{col3-col4} ...
   (Di)\ $U_x\widetilde{a}$ &  (Dii)\ $U_{\widetilde{a}}x$ &  %
   (Diii)\ $U_{\widetilde{a}}U_x \widehat{J}$ \\
   (Diii')\ $U_xU_{\widetilde{a}}\widehat{J}$  & %
   (Div)\ $V_{x,\widetilde{a}} \widehat{J}$  & (Div')\ $V_{ \widetilde{a},x}\widehat{J}$, \\
 \end{tabular}}
 \noindent  where $\widehat{J}$ denotes the unital hull $\widehat{J}=\Phi1\oplus J$ of $J$. We will denote the set of $J$-denominators of $\widetilde{a}$ by ${\cal D}_J(\widetilde{a})$. It has been proved in \cite{pi-ii} that ${\cal D}_J(\widetilde{a})$ is an inner ideal of $J$.
\end{apartado}

\begin{apartado}\label{new_ii} Let $J$ be a Jordan algebra, $K$ be  an inner ideal of $J$ and $a\in J$. Following \cite{densos, esenciales} we consider the sets $  (K:a)_L=\{x\in K\  \mid   x\circ a\in K  \} $ and $ (K:a)=\{x\in K\  \mid U_ax,\ x\circ a\in K  \}$, both   inner ideals of $J$ for all $a\in J$.  Inductively, for any   finite family of elements $a_1,\ \ldots, a_n\in J$, we also define $(K:a_1:a_2:\ldots:a_n)=((K:a_1:\ldots
a_{n-1}): a_n)$.\end{apartado}

\begin{apartado}\label{nii} An inner ideal $K$ of $J$ is said to be \emph{dense} if $U_c(K:a_1:a_2:\ldots:a_n)\neq0$ for any finite collection of
elements  $a_1,\ \ldots, a_n\in J$ and any $0\neq c\in J$.
\end{apartado}

\begin{apartado}\label{aq} Let $J$ be a subalgebra of a Jordan algebra $Q$. Following \cite{esenciales,densos}, we say that $Q$ is a \emph{general algebra of quotients of $J$} if the following conditions hold:
\begin{enumerate}\item[(AQ1)] ${\cal D}_J(q)$ is a dense inner ideal of $J$ for all $q\in Q$. \item[(AQ2)] $U_q {\cal D}_J(q)\neq0$ for any nonzero $q\in Q$.\end{enumerate}
\end{apartado}

\begin{apartado}\label{cl-q} A nonempty subset $S\subseteq J$ is a \emph{monad} if $U_st$ and $s^2$ are in $S$ for all $s,t\in S$. A
subalgebra $J$ of a unital Jordan algebra $Q$ is an \emph{$S$-order in $Q$} or equivalently $Q$ is a \emph{$S$-algebra of quotients, or an algebra of fractions (of $J$ relative to $S$)} if:
\begin{enumerate}
\item[(ClQ1)] every element $s\in S$ is invertible in $Q$.
\item[(ClQ2)] each $q\in Q$ has a $J$-denominator in $S$.
\item[(ClQ3)] for all $s,t\in S$, $U_sS\cap U_tS\neq \emptyset$.
\end{enumerate}
\end{apartado}

\begin{apartado} An element $s$ of a Jordan algebra $J$ is said to be \emph{injective} if the mapping $U_s$ is injective over $J$. We will denote by $Inj(J)$ the set of injective elements of $J$ \cite{fgm}. \end{apartado}

\begin{apartado}\label{classical_order} A Jordan algebra $Q$ containing $J$ as a subalgebra is a \emph{classical algebra of quotients of $J$}, or an \emph{algebra of fractions} of $J$ (and $J$ is a \emph{classical order in $Q$}) if all injective elements of $J$ are invertible in $Q$ and  ${\cal D}_J(q)\cap Inj(J)\neq\emptyset$ for all $q\in Q$.  That is, classical algebras of quotients are $S$-algebras of quotients (or  algebras of fractions relative to $S$) for $S=Inj(J)$. Moreover, they are algebras of quotients (in the sense of \ref{aq}) \cite[Examples 2.3.5]{densos}.
\end{apartado}

\begin{apartado}\label{quot FC} The notions of classical order and that of algebra of quotients are equivalent for subalgebras of unital Jordan algebras of finite capacity. Indeed, if $J$ is a subalgebra of a unital Jordan algebra of finite capacity $Q$, then it was proved in \cite[Lemma 2.3]{mp-localorders} that $J$ is a classical order in $Q$ if and only if $Q$ is an algebra of quotients of $J$.
\end{apartado}

\begin{apartado} A nondegenerate Jordan algebra is {\it Goldie} if and only if it is nonsingular and has finite uniform dimension. Nondegenerate Goldie Jordan algebras were characterized in \cite[Theorem 9.3]{fgm}. We only recall here that although nondegenerate Goldie Jordan algebras satisfy the property that  essential inner ideals contain injective elements, such Jordan algebras are not necessarily Goldie in contrast to the analogous assertion for associative rings. Indeed it suffices to consider any Jordan algebra of a quadratic form on a vector space with infinite dimensional totally isotropic subspaces.
\end{apartado}

\begin{teorema}\label{th-FGM goldie LC JA}{\rm  \cite[Theorem 10.2]{fgm}}  A Jordan algebra $J$ is a classical order in a nondegenerate unital Jordan algebra $Q$ with finite capacity if and only if it is nondegenerate and satisfies the property: an inner ideal $K$ of $J$ is essential if and only if $K$ contains an injective element. Moreover, $Q$ is simple if and only if $J$ is prime.    \end{teorema}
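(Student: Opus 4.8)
The plan is to establish the equivalence by proving each implication separately, together with the final primeness/simplicity clause, using throughout the annihilator descriptions valid in a nondegenerate Jordan algebra: $s\in J$ is injective exactly when $ann_J(s)=0$, and an inner ideal $K$ is essential exactly when $ann_J(K)=0$. With these in hand two of the required implications are immediate. If $s\in K\cap Inj(J)$, then $ann_J(K)\subseteq ann_J(s)=0$, so $K$ is essential; this is the \emph{injective $\Rightarrow$ essential} half and uses only nondegeneracy. For the necessity of nondegeneracy of $J$, I would invoke \ref{quot FC} to replace ``classical order'' by ``algebra of quotients'' (so that conditions (AQ1)--(AQ2) of \ref{aq} hold) and then use that an algebra of quotients is nondegenerate if and only if its subalgebra is, so that $J$ inherits nondegeneracy from $Q$.

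The one nontrivial part of necessity is \emph{essential $\Rightarrow$ injective}, where I would exploit the finite capacity of $Q$. Given an essential inner ideal $K$ of $J$, I would first show that the inner ideal $\langle K\rangle_Q$ generated by $K$ in $Q$ is essential in $Q$: a nonzero annihilator $q$ of $\langle K\rangle_Q$ would, by (AQ2), pull back under a denominator to a nonzero element $U_dq\in J$ annihilating $K$, contradicting $ann_J(K)=0$. Since $Q$ is nondegenerate, unital and of finite capacity, every essential inner ideal of $Q$ contains an invertible element, so $\langle K\rangle_Q$ contains an invertible $w$. Expressing $w$ through finitely many generators drawn from $K$ and choosing a common injective denominator $d\in Inj(J)$, the element $U_dw$ lies in $K$ and is injective, being $U_d$ (injective) composed with $U_w$ (bijective). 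I expect the denominator bookkeeping in this last step---guaranteeing simultaneously that $U_dw\in K$ and that it stays injective---to be the most delicate point of this half.

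For sufficiency, suppose $J$ is nondegenerate and satisfies the essentiality property. Nonsingularity follows at once: were $ann_J(x)$ essential for some $x\neq0$, it would contain an injective $s$, but $s\in ann_J(x)$ gives $U_sx=0$, forcing $x=0$; hence $\Theta(J)=0$. The real work is to construct $Q$ and to show it has finite capacity. I would build an algebra of quotients $Q$ of the nonsingular nondegenerate algebra $J$ in the sense of \ref{aq}, check that it is nondegenerate and unital, and then---this is the crux---prove that it has finite capacity. Crucially, finite capacity here cannot be read off from finite uniform dimension, since the quadratic-form algebra recalled just above has infinite uniform dimension yet capacity two; instead I would argue that, because the injective elements of $J$ become invertible in $Q$ and essential inner ideals are exactly those containing injective elements, the principal inner ideals of $Q$ satisfy the descending chain condition, so that $Q$ coincides with its socle and has only finitely many simple summands, i.e.\ has finite capacity (see \ref{socle}). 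With finite capacity established, \ref{quot FC} upgrades the relation to the statement that $J$ is a classical order in $Q$. This finite-capacity verification is, in my view, the principal obstacle of the whole theorem.

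For the concluding clause, I would transfer primeness across the order relation. If $J$ is prime, then, $Q$ being an algebra of quotients of $J$, it is prime as well, and a prime nondegenerate Jordan algebra of finite capacity, having the descending chain condition, is simple. Conversely, if $Q$ is simple, then $J$---being an order, hence an essential subalgebra in the sense that its nonzero ideals generate dense inner ideals of $Q$---cannot possess two nonzero ideals with vanishing product, so $J$ is prime.
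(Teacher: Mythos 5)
First, a point of reference: the paper itself contains no proof of this statement --- it is quoted verbatim from \cite[Theorem 10.2]{fgm}, where it is the culmination of the entire Goldie-theoretic apparatus of that paper. So your proposal must be judged against that known proof, and it has a genuine flaw plus two unfilled holes. The flaw is your opening ``annihilator description'': it is \emph{not} true in a nondegenerate Jordan algebra that an inner ideal $K$ is essential exactly when $ann_J(K)=0$; only the implication essential $\Rightarrow$ $ann_J(K)=0$ holds. Take $J=H(k\langle x,y\rangle,\ast)$, the symmetric elements of the free associative algebra under the reversal involution: $J$ is strongly prime (the free algebra is a domain), the element $x$ is injective, and the inner ideal $U_xJ$ has zero annihilator (again because the free algebra is a domain), yet $U_xJ\cap U_yJ=0$ since words in $xJx$ begin and end with $x$. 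Thus your ``immediate'' implication --- an inner ideal containing an injective element is essential --- is not merely unproved at the level of generality where you argue it; it is \emph{false} for general nondegenerate $J$. In the theorem this implication belongs to the necessity direction and must be extracted from the order hypothesis (injective elements of $J$ become invertible in $Q$, and in a nondegenerate unital algebra of finite capacity inner ideals can be complemented), which is exactly how \cite{fgm} obtains it.

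The two remaining gaps are the ones you partly flag yourself. In ``essential $\Rightarrow$ injective,'' the denominator property only guarantees $U_dw\in J$, never $U_dw\in K$: to land inside $K$ you would need the denominator itself (or a substitute) to lie in $K$, and producing an injective denominator \emph{inside a prescribed essential inner ideal} is precisely the nontrivial content here --- your sketch names the difficulty (``denominator bookkeeping'') without resolving it. In the sufficiency direction, the construction of a \emph{unital} nondegenerate $Q$ of finite capacity is the heart of the Lesieur--Croisot theorem: your argument that ``the principal inner ideals of $Q$ satisfy the dcc because injective elements become invertible and essential inner ideals contain injectives'' is a restatement of the conclusion, not a derivation; in \cite{fgm} this step consumes the bulk of the paper (uniform elements, local Goldie conditions, analysis of the socle of the maximal algebra of quotients of the nonsingular algebra $J$). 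Your observations that uniform dimension cannot replace capacity (the quadratic-form example) and your treatment of the simplicity/primeness clause are sound, but as it stands the proposal is a plan whose one fully executed step rests on a false lemma, and whose two hard steps remain open.
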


\begin{apartado}\label{def LC element JA}  Jordan algebras satisfying the    equivalent conditions of  \cite[Theorem 10.2]{fgm} are named {\sl Lesieur-Croisot} Jordan algebras, LC-algebras for short, in \cite[Definition 3.2]{mt-lc}. An element $a$  of a Jordan algebra $J$ is said to be a {\sl Lesieur-Croisot element} (LC-element, for short) if the local algebra of $J$ at $a$ is a LC-algebra. The set  $LC(J)$ of LC-elements of a Jordan algebra $J$  is   an ideal   if the Jordan algebra $J$  is nondegenerate \cite[Theorem 5.13]{mt-lc}. It follows straightforwardly from \cite[Theorem 9.3]{fgm} that nondegenerate Goldie Jordan algebras are LC-algebras. LC-Jordan algebras were studied by Montaner and Tocón in \cite{mt-lc} (see also \cite{lc-pr}).
  \end{apartado}

 \section{The singular ideal of semiprime associative pairs.}

In this section we study the set  of elements of a semiprime associative pair with essential left (or right) annihilator.

\begin{apartado}\label{assoc-imbedding}  Let $A=(A^+,A^-)$ be an associative pair. Following \cite{loos-jp} we denote by  $({\cal E}, e)$ the {\it standard imbedding } of $A$, a unital associative algebra having an idempotent $e$ which produces a Peirce decomposition ${\cal E}={\cal E}_{11}\oplus {\cal E}_{12}\oplus {\cal E}_{21}\oplus {\cal E}_{22}$ where ${\cal E}_{ij}=e_i{\cal E}e_j$ with $e_1=e$ and $e_2=1-e$, such that $A=({\cal E}_{12},{\cal E}_{21})$. Recall that ${\cal E}_{11}$ is spanned by the idempotent $e$ and all the elements $x_{12}y_{21}$ where $x_{12}\in{\cal E}_{12}$, $y_{21}\in {\cal E}_{21}$. Similarly ${\cal E}_{22}$ is spanned by the element $1-e$ and all the elements $y_{21}x_{12}$. We will denote by  ${\cal A}$ the subalgebra of ${\cal E}$ generated by the elements $x_{12}\in{\cal E}_{12}$ and $y_{21}\in {\cal E}_{21}$.
We will say that ${\cal A}$ is the {\it associative envelope} of $A$. Note that ${\cal A}$ is an essential ideal of ${\cal E}$ and $A=({\cal A}_{12},{\cal A}_{21})$ where ${\cal A}_{ij}=e_i{\cal A}e_j$, $i,j=1,2$.
\end{apartado}

\begin{remark} Notation for the  associative envelope ${\cal E}$ and the standard imbedding ${\cal A}$   of an  associative pair $A$, as used here, follows that considered in \cite{mp-lancaster}. We refer the reader to  \cite[Remark 2.1]{mp-lancaster} for discussion about the different notation and terminology used for  ${\cal E}$ and   ${\cal A}$. See, for instance, \cite{goldie pares, ft, gs comm alg32,loos-1995}.
\end{remark}

\begin{apartado} Given an associative pair $A=(A^+,A^-)$, we will say that a $\Phi$-submodule  $L$ of $A^\sigma $  is a {\sl left ideal} of $A$ if $A^\sigma  A^{-\sigma  }L\subseteq L$, $\sigma =\pm$. Right ideals are defined similarly. An {\sl  ideal} $I=(^+,I^-)$ of $A$ is a pair of two-sided (i.e. left and right) ideals $I^\sigma\subseteq A^\sigma$, such that $A^\sigma I^{-\sigma}A^\sigma\subseteq I^\sigma$, $\sigma=\pm$.
\end{apartado}

\begin{apartado} A left ideal $L$ of $A$, contained in $A^\sigma $, is {\it   essential} if it has   nonzero intersection with any nonzero left ideal of $A$ contained in $A^\sigma $. Essential right ideals are defined similarly.
\end{apartado}

\begin{apartado} An associative pair $A$ is {\sl semiprime} if and only if $I^\sigma A^{-\sigma}I^\sigma=0$, $\sigma=\pm$, implies $I=0$ for any ideal $I$ of $A$ and {\sl prime} if and only if $I^\sigma A^{-\sigma}J^\sigma=0$, $\sigma=\pm$, implies $I=0$ or $J=0$ for any ideals $I$ and $J$ of $A$. An associative pair $A$ is (semi)prime if and only if so is its standard imbedding ${\cal E}$ if and only if so is its associative envelope ${\cal A}$    \cite[Proposition 4.2]{goldie pares}.
\end{apartado}

\begin{apartado}  Let  $A=(A^+,A^-)$ be an associative pair. Any element    $a\in A^{-\sigma}$, endows the $\Phi$-module $A^\sigma$ with a multiplication $x\cdot_a y=xay$  for any $x,y\in A^\sigma$, and defines an associative  $\Phi$-algebra $(A^\sigma)^{(a)}$ called the $a$-{\sl homotope algebra} of $A$. The {\sl local algebra $A_a^\sigma$ of $A$ at $a$} is defined as the quotient of the $a$-homotope  algebra $(A^\sigma)^{(a)}$ by the ideal $ Ker \, a=\{x\in A^\sigma\mid axa= 0\}$. Moreover, for all $a\in A^{-\sigma}$, we have $A^\sigma_a\cong {\cal E}_a\cong {\cal A}_a$ \cite[Proposition 3.4(3)]{ft}.
\end{apartado}

\begin{apartado}\label{PI elements AP}  An element $a\in A^{-\sigma}$ of an associative pair $A$ is a PI-{\sl element} if the local algebra $A_a^\sigma$ of $A$ at $a$ satisfies a polynomial identity. We write $PI(A)=(PI(A^+),PI(A^-))$,  where $PI(A^\sigma)$ is the set of PI-elements of $A^\sigma$.  The set of PI-elements of any associative pair was proved to be an ideal in \cite[Proposition 1.6]{pi-i}. Moreover, if $A$ is semiprime $PI(A)=  PI({\cal E})\cap A$ \cite[Proposition 2.3]{mp-lancaster}.
\end{apartado}

\begin{apartado}  Let $A$ be an associative pair. For any subset $X\subseteq A^\sigma $, $\sigma =\pm$, the sets \begin{align*} &lann_A(X)=\{b\in A^{-\sigma }\mid bXA^{-\sigma }=A^\sigma bX=0\}\\ &rann_A(X)=\{b\in A^{-\sigma }\mid XbA^\sigma =A^{-\sigma }Xb=0\} \end{align*} are the  {\sl left} and {\sl right annihilators} of $X$ in $A$ and are left and right ideals of $A$ respectively. If $A$ is semiprime, then \begin{align*} &lann_A(X)=\{b\in A^{-\sigma }\mid  A^\sigma bX=0\}\\ &rann_A(X)=\{b\in A^{-\sigma }\mid  A^{-\sigma }Xb=0\}. \end{align*}  If $X=\{x\}$ we will  denote by $lann_A(x)$ and $rann_A(x)$ the left and right annihilators of the element  $x$. The {\sl annihilator} of $X$ is $ann_A(X)=lann_A(X)\cap rann_A(X)$. If $A$ is semiprime, for any ideal $I=(I^+,I^-)$ of $A$, $ann_A(I)=(ann_A(I^+),ann_A(I^-))$, where $ann_A(I^\sigma)=\{x\in A^{-\sigma}\mid xI^\sigma x=0\}$, is an ideal of $A$ \cite[Proposition 2.2]{goldie pares}.
\end{apartado}

\begin{apartado}  Let $A$ be a semiprime associative pair.  Then   $Z_l(A)=(Z_l(A^+),Z_l(A^-))$, where $Z_l(A^\sigma )=\{a\in A^\sigma \mid lann_A(a)\ \hbox{is an essential left ideal of $A$}\}$,  is an ideal of $A$ called  the {\sl left singular ideal } of $A$   \cite[Theorem 3.1]{goldie pares}. Analogously we  define $Z_r(A)$ to be the {\sl right singular ideal} of $A$. We will say that $A$ is {\sl left nonsingular} if $Z_l(A)=0$. {\sl Right nonsingularity} is defined similarly. An associative pair $A$ is said to be {\sl nonsingular} if it is both left and right
nonsingular.
\end{apartado}

\begin{proposicion}\label{ideal singular pares asociativos} Let $A$ be a semiprime associative pair, and $a\in A^{-\sigma}$. Then:
\begin{enumerate}
\item[(i)] If $\overline{x}=x+Ker\, a\in Z_l(A_a^\sigma)$, then $axa\in Z_l(A^{-\sigma})$.
    \item[(ii)] If $a\in Z_l(A^{-\sigma})$, then $Z_l(A_a^\sigma)=A_a^\sigma$.
\item[(iii)] $\overline{Z_l(A^\sigma)}=(Z_l(A^\sigma)+Ker\, a)/Ker\, a\subseteq Z_l(A^\sigma_a)$.
Thus  $Z_l(A^\sigma_a)=0$ implies that  $a\in ann_A(Z_l(A^{\sigma}))$.
\end{enumerate}
\end{proposicion}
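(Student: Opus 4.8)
The plan is to deduce all three statements by passing to the standard imbedding ${\cal E}$ of $A$ and reducing them to the corresponding facts about the left singular ideal of a semiprime associative \emph{algebra} and its local algebras. Fix $\sigma=+$ for notational ease and regard $A^+={\cal E}_{12}$, $A^-={\cal E}_{21}$, with $a\in{\cal E}_{21}$. Two inputs drive the reduction. First, the isomorphism $A_a^\sigma\cong{\cal E}_a$ identifies the local algebra of the pair with the local algebra of ${\cal E}$ at $a$, compatibly with left annihilators: for $w,z\in A^\sigma$ one has $\overline{w}\in lann_{A_a^\sigma}(\overline{z})\iff waz\in Ker\,a\iff awaza=0$, which is exactly the condition $\overline{w}\in lann_{{\cal E}_a}(\overline{z})$. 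Second, since $A$ is semiprime so is ${\cal E}$, and I would establish (as the analogue of the identity $PI(A)=PI({\cal E})\cap A$ recorded above) the correspondence $Z_l(A^\sigma)=Z_l({\cal E})\cap{\cal E}_{12}$ and $Z_l(A^{-\sigma})=Z_l({\cal E})\cap{\cal E}_{21}$ between the left singular ideal of the pair and that of ${\cal E}$.

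Granting these, the three parts fall out of the algebra-level statements revisited from \cite{goldie pares, gs comm alg32}, applied to $R={\cal E}$ and $a\in R$: namely (a) $\overline{x}\in Z_l(R_a)\Rightarrow axa\in Z_l(R)$; (b) $a\in Z_l(R)\Rightarrow Z_l(R_a)=R_a$; and (c) $(Z_l(R)+Ker\,a)/Ker\,a\subseteq Z_l(R_a)$. For (iii), take $z\in Z_l(A^\sigma)=Z_l({\cal E})\cap{\cal E}_{12}\subseteq Z_l({\cal E})$; then (c) gives $\overline{z}\in Z_l({\cal E}_a)=Z_l(A_a^\sigma)$, which is the asserted inclusion. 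The final assertion follows formally: if $Z_l(A_a^\sigma)=0$, the inclusion forces $Z_l(A^\sigma)\subseteq Ker\,a$, i.e. $a\,Z_l(A^\sigma)\,a=0$; since $Z_l(A)$ is an ideal and $A$ is semiprime, the characterization $ann_A(Z_l(A^\sigma))=\{x\in A^{-\sigma}\mid x\,Z_l(A^\sigma)\,x=0\}$ yields $a\in ann_A(Z_l(A^\sigma))$. For (i), apply (a): from $\overline{x}\in Z_l(A_a^\sigma)=Z_l({\cal E}_a)$ we get $axa\in Z_l({\cal E})$, and as $axa\in A^{-\sigma}={\cal E}_{21}$ the correspondence gives $axa\in Z_l({\cal E})\cap{\cal E}_{21}=Z_l(A^{-\sigma})$. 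For (ii), $a\in Z_l(A^{-\sigma})=Z_l({\cal E})\cap{\cal E}_{21}\subseteq Z_l({\cal E})$, so (b) gives $Z_l({\cal E}_a)={\cal E}_a$, i.e. $Z_l(A_a^\sigma)=A_a^\sigma$.

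The real work, and the step I expect to be the main obstacle, is the correspondence $Z_l(A^\sigma)=Z_l({\cal E})\cap{\cal E}_{12}$: it requires translating essentiality of a left ideal of the pair $A$ inside $A^\sigma$ into essentiality of the honest left annihilator inside ${\cal E}$, and here the Peirce bookkeeping together with the fact that the associative envelope ${\cal A}$ is an \emph{essential} ideal of ${\cal E}$ is needed to transport essential left ideals back and forth without collapse. A closely related delicate point, already visible if one argues directly in the pair, is the role of $Ker\,a$: a nonzero left ideal of $A$ may have zero image in $A_a^\sigma$ (precisely when it lies in $Ker\,a$), so the extension--contraction of essential left ideals along $A^\sigma\to A_a^\sigma$ must be set up with care. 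Throughout, semiprimeness is what keeps the computations honest, since it supplies both the one-sided descriptions $lann_A(X)=\{b\mid A^\sigma bX=0\}$ and the semiprimeness of the local algebra $A_a^\sigma$.
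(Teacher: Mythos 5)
Your proposal is not a proof but a reduction to an unproven lemma, and that lemma carries essentially all the pair-theoretic content of the proposition. Everything funnels through the identity $Z_l(A^\sigma)=Z_l({\cal E})\cap{\cal E}_{12}$ (and its partner for ${\cal E}_{21}$), which you yourself flag as ``the real work'' and then do not carry out. This identity is not an innocuous analogue of $PI(A)=PI({\cal E})\cap A$: the PI transfer is a statement about local algebras, where the isomorphism $A^\sigma_a\cong{\cal E}_a$ does all the work, whereas the singular condition compares essentiality in two genuinely different lattices of left ideals. Concretely, for $x\in{\cal E}_{12}$ the Peirce rules give ${\cal E}_{12}{\cal E}_{12}={\cal E}_{22}{\cal E}_{12}=0$, so $lann_{\cal E}(x)$ automatically contains the whole column ${\cal E}(1-e)={\cal E}_{12}\oplus{\cal E}_{22}$; essentiality of $lann_{\cal E}(x)$ in ${\cal E}$ is therefore a condition on left ideals of ${\cal E}$ meeting the opposite column ${\cal E}_{11}\oplus{\cal E}_{21}$, while pair essentiality of $lann_A(x)$ concerns only left ideals of $A$ inside ${\cal E}_{21}$, and a left ideal $M$ of ${\cal E}$ cannot simply be split into Peirce pieces (e.g.\ $M(1-e)\not\subseteq M$ in general, since $M$ is only a left ideal). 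Even at the level of elements the two annihilator conditions differ ($bx=0$ for $b\in{\cal E}_{21}$ versus the pair condition $A^\sigma bx=0$), and identifying them already consumes semiprimeness. The correspondence may well be true, but none of the references in the paper's toolkit hands it to you: what \cite{gs comm alg32} supplies --- and the paper invokes later, in Proposition \ref{relacion ideales singulares} --- is the transfer $Z_l(A)\cap R=Z_l(R)$ for a subpair $R$ with $R\subseteq A\subseteq Q_s(R)$, not a pair-versus-standard-imbedding identity. So the main obstacle you name is a genuine gap, at least as hard as the proposition itself.

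For comparison, the paper never passes to ${\cal E}$. Parts (i) and (ii) are available verbatim at the pair level and are simply quoted from \cite[Proposition 5.3(i)-(ii)]{goldie pares}, so no reduction is needed there at all. Only (iii) is actually proved, and it is proved inside the pair: given $x\in Z_l(A^\sigma)$ with $\overline{x}\neq0$ and a nonzero left ideal $\overline{L}$ of $A^\sigma_a$, one lifts $\overline{L}$ to a left ideal $L$ of $A$ contained in $A^\sigma$ (via the lifting argument in the proof of \cite[Lemma 6.3]{fgm} --- exactly the point where your worry about left ideals dying in $Ker\,a$ is handled), shrinks $L$ into $lann_A(axa)$ --- legitimate because $Z_l(A)$ is an ideal of $A$ by \cite[Theorem 3.1]{goldie pares}, so $axa\in Z_l(A^{-\sigma})$ has essential left annihilator --- and then uses semiprimeness to get $0\neq LaA^\sigma aL\subseteq L\cap lann_A(axa)$, producing a nonzero element $\overline{l_1}\cdot\overline{r}\cdot\overline{l_2}\in\overline{L}\cap lann_{A^\sigma_a}(\overline{x})$. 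Your endgame is fine: the formal deductions from the algebra-level facts (a)--(c), and the closing step $aZ_l(A^\sigma)a=0\Rightarrow a\in ann_A(Z_l(A^\sigma))$ via \cite[Proposition 2.2]{goldie pares}, match the paper. To turn the proposal into a proof you must either establish the $Z_l$-transfer between $A$ and ${\cal E}$ honestly, or bypass it by running the computation for (iii) directly in the pair, as the paper does.
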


\begin{proof} (i) and (ii) follow from  \cite[Proposition 5.3(i)-(ii)]{goldie pares}. To prove (iii), take $x\in Z_l(A^\sigma) $ with $ \overline{x}=x+Ker\, a\neq0$, so that $axa\neq0$ and let $ \overline{L}$ be a nonzero left ideal of $A^\sigma_a$, where $-$ denotes the projection of $(A^\sigma)^{(a)}$ onto $A^\sigma_a$. We can assume that $L$ is a nonzero left ideal of $A$, contained in $A^\sigma$ (see the proof of \cite[Lemma 6.3]{fgm}). Moreover, as $Z_l(A)$ is an ideal of $A$, and therefore $axa\in Z_l(A)$, by essentiality of $lann_A(axa)$  in $A$, we can assume that $L\subseteq lann_A(axa)$.

Since $ \overline{L}\neq 0$, we have $aLa\neq0 $ and then, by semiprimeness of $A$,   $aLaA^\sigma aLa\neq0$. Hence $0\neq LaA^\sigma aL\subseteq lann_A(axa)$. Take now $l_1,l_2\in L$, $r\in A^\sigma$  with $ al_1aral_2a\neq 0$. Then $0\neq l_1aral_2 \in LaA^\sigma aL\subseteq L \cap lann_A(axa)$. Therefore  we have $a l_1aral_2axa=0$ which implies that $\overline{l_1}\cdot\overline{r}\cdot\overline{l_2}\cdot\overline{x}=\overline{0}$, hence $\overline{l_1}\cdot\overline{r}\cdot\overline{l_2}\in \overline{L}\cap lann_{A^\sigma_a}(\overline{x})$ and $\overline{l_1}\cdot\overline{r}\cdot\overline{l_2}\neq0$ since $al_1aral_2a\neq0$, implying that $ \overline{x}\in Z_l(A^\sigma_a)$.

Finally, if $Z_l(A^\sigma_a)=0$, then we have $aZ_l(A^\sigma)a=0$ and therefore, by \cite[Proposition 2.2(ii)]{goldie pares}, $a\in ann_A(Z_l(A^{\sigma}))$.
\end{proof}

\begin{apartado} The {\sl left Goldie (or uniform) dimension} of a left ideal $L$ of an associative pair $A$ is a nonnegative integer $n$ such that $L$ contains a direct sum of $n$ nonzero left ideals and any direct sum of nonzero left ideals contained in $L$ has at most $n$   nonzero left ideals. The {\sl left Goldie (or uniform) dimension} of an element $a\in A^\sigma$ is the left Goldie dimension of the principal left ideal $A^\sigma A^{-\sigma}a$  generated by $a$.  A left ideal $L$ of a semiprime associative pair  $A$, contained in $A^\sigma$, has {\sl finite left Goldie (or uniform) dimension} if it contains no infinite direct sums of nonzero left ideals of $A$, contained in $A^\sigma$. If any element of $A$ has finite left Goldie (or uniform) dimension, then it is said that $A$ has {\sl finite left local Goldie (or uniform) dimension}. Following \cite{gs comm alg32}  we write $I_l(A)=(I_l(A^+),I_l(A^-))$, where $I_l(A^\sigma)$ denotes the set of elements of $A^\sigma$ having finite left Goldie (or uniform) dimension. Similarly, we denote the set of elements of $A $ with finite right Goldie (or uniform) dimension as $I_r(A) $.
\end{apartado}

\begin{proposicion}\label{assoc pairs ideal I} Let $A$ be a semiprime associative pair. If $Z_l(A )\cap Z_r(A )=0$, then $I_l(A) \cap I_r(A)$ is an ideal of $A$ contained in $ann_A(Z_l(A ) + Z_r(A ))$.
\end{proposicion}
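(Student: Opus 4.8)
The plan is to run everything through the local algebras $A_a^\sigma$, using the tight control of the singular ideals under this passage given by Proposition~\ref{ideal singular pares asociativos} and its evident right-handed analogues. Write $M:=ann_A(Z_l(A)+Z_r(A))=ann_A(Z_l(A))\cap ann_A(Z_r(A))$; since $Z_l(A)+Z_r(A)$ is an ideal, $M$ is an ideal of $A$. There are two things to prove: the inclusion $I_l(A)\cap I_r(A)\subseteq M$, and the statement that $N:=I_l(A)\cap I_r(A)$ is itself an ideal. The key external inputs I will use are, first, that local algebras of semiprime pairs are semiprime, together with the local-to-global comparison of Goldie dimension from \cite{gs comm alg32}, and second, the module-theoretic fact that over a one-sided nonsingular module, one-sided multiplication cannot raise the uniform dimension on that side.

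First I would prove the inclusion. Fix $a\in I_l(A^{-\sigma})\cap I_r(A^{-\sigma})$ and pass to $A_a^\sigma$. This is a semiprime ring, and since $a$ has finite left and right Goldie dimension it has finite left and right uniform dimension. The hypothesis $Z_l(A)\cap Z_r(A)=0$ is used precisely at this point: if $\overline{x}\in Z_l(A_a^\sigma)\cap Z_r(A_a^\sigma)$, then Proposition~\ref{ideal singular pares asociativos}(i) and its right analogue give $axa\in Z_l(A^{-\sigma})\cap Z_r(A^{-\sigma})=0$, so $x\in Ker\,a$ and $\overline{x}=0$; hence $Z_l(A_a^\sigma)\cap Z_r(A_a^\sigma)=0$. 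A semiprime ring of finite left and right uniform dimension whose two singular ideals meet trivially is nonsingular, so $Z_l(A_a^\sigma)=Z_r(A_a^\sigma)=0$. Proposition~\ref{ideal singular pares asociativos}(iii) and its right analogue then yield $a\in ann_A(Z_l(A^\sigma))\cap ann_A(Z_r(A^\sigma))=M^{-\sigma}$. Running the same argument with $\sigma$ in place of $-\sigma$ gives $N\subseteq M$.

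For the ideal property the crucial observation is that $M$ is nonsingular: by semiprimeness an ideal meets its annihilator trivially, so $M^\sigma\cap Z_l(A^\sigma)\subseteq ann_A(Z_l(A))^\sigma\cap Z_l(A^\sigma)=0$, and likewise for $Z_r$. Since a singular element of $M^\sigma$ would by definition lie in $M^\sigma\cap Z_l(A^\sigma)$, this says exactly that $M^\sigma$ is a nonsingular left \emph{and} right $A$-module. Now the dimension estimate applies: if $b\in M^\sigma$ and we multiply $b$ on the right, the resulting principal left ideal is a homomorphic image $A^\sigma A^{-\sigma}b/K$ of $A^\sigma A^{-\sigma}b$ which is again a submodule of the nonsingular module $M^\sigma$; hence $K$ is a closed submodule, and by additivity of uniform dimension over closed submodules the image has uniform dimension at most that of $A^\sigma A^{-\sigma}b$. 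Granting this (and its mirror on the other side), closure of $N$ follows: left multiplication keeps an element of $N$ in $I_l$ trivially and in $I_r$ by the estimate on the right-nonsingular side, right multiplication keeps it in $N$ symmetrically, and the two-sided product $A^\sigma N^{-\sigma}A^\sigma$ is a composition of a left and a right multiplication, each controlled by the corresponding nonsingularity of $M$. As $N^\sigma$ is a $\Phi$-submodule (uniform dimension is subadditive) and all these products remain inside $M$, we conclude that $N$ is an ideal of $A$ contained in $M$.

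The main obstacle is the ideal property, and inside it the fact that the intersection of a set of finite left uniform dimension with a set of finite right uniform dimension is two-sided: this is false in general and is rescued only by nonsingularity, which is why the inclusion into the nonsingular ideal $M$ must be established first. A secondary point requiring care is that the uniform dimension of an element is measured in $A$ while the nonsingular estimate is stated for $M$; these are reconciled because $M$ is an ideal of the semiprime pair $A$, which legitimates transporting the module estimate back to $I_l(A)$ and $I_r(A)$. The one genuinely external ingredient is the ring-theoretic statement that a semiprime ring of finite left and right uniform dimension with $Z_l\cap Z_r=0$ is nonsingular, which is where the hypothesis does its work; alternatively one may reduce the whole assertion to the standard imbedding $\mathcal{E}$, where two-sidedness of the relevant ideals is automatic and the closure bookkeeping becomes ordinary ring theory.
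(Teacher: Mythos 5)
Your proposal is in substance the argument the paper intends: the paper proves this proposition by transplanting the proof of \cite[Proposition 4.6]{mt-lc} to pairs, with the algebra lemmas \cite[Lemma 4.5]{mt-lc} replaced by Proposition~\ref{ideal singular pares asociativos}(i)--(iii), and both halves of your proof follow exactly that pattern: localize at $a$, push the hypothesis $Z_l(A)\cap Z_r(A)=0$ down to $A^\sigma_a$ via part (i), invoke the associative fact that a semiprime ring of finite left and right uniform dimension with $Z_l\cap Z_r=0$ is nonsingular, lift back via part (iii), and then exploit nonsingularity of $M=ann_A(Z_l(A)+Z_r(A))$ through closed-kernel uniform-dimension estimates. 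The ``external ingredient'' you flag is indeed the associative engine inside \cite[Proposition 4.6]{mt-lc}, so quoting it is legitimate, and your implicit passage between the Goldie dimension of the element $a$ and the uniform dimension of the local algebra $A^\sigma_a$ is precisely what \cite[Proposition 3.3(iv)]{gs comm alg32} --- one of the two references the paper's proof tells you to insert --- supplies.

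The one step that does not survive as literally written is the middle product $A^\sigma N^{-\sigma}A^\sigma$. In a pair there are no one-sided multiplications by a single element: the available operators $n\mapsto (xy)n$ and $n\mapsto n(yx)$ preserve the component $A^{-\sigma}$, so for $n\in N^{-\sigma}$ and $x,z\in A^\sigma$ the element $xnz\in A^\sigma$ is \emph{not} a composition of a left and a right multiplication of the pair, and your closed-kernel estimate, which bounds $udim\big(A^\sigma A^{-\sigma}b\,yx\big)$ by $udim\big(A^\sigma A^{-\sigma}b\big)$, yields no bound on $udim\big(A^\sigma A^{-\sigma}(xnz)\big)$ in terms of data attached to $n$: the natural left-module epimorphism onto $A^\sigma A^{-\sigma}(xnz)=(A^\sigma A^{-\sigma}x)nz$ has as domain the principal left ideal of the \emph{arbitrary} element $x$, whose dimension is unbounded. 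This component-switching product is exactly where the pair case exceeds the algebra case of \cite{mt-lc} (for an algebra, ideal closure only requires $RN+NR\subseteq N$), and it is what the paper's other prescribed citation, \cite[Proposition 3.2]{goldie pares}, is there to handle, by transferring singular ideals, annihilators and dimensions between $A$ and its standard imbedding ${\cal E}$: in ${\cal E}$ the element $xnz$ \emph{is} obtained by one-element one-sided multiplications, and your closed-kernel argument applies verbatim inside the nonsingular ideal of ${\cal E}$ generated by $M$ (note that an ideal $T$ of the semiprime ${\cal E}$ with $T\cap A=0$ is zero, so $Z_l({\cal E})\cap Z_r({\cal E})=0$ follows from your hypothesis). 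The same caution applies to your parenthetical ``uniform dimension is subadditive'': for sums of left ideals this again holds only because everything happens inside the nonsingular $M$, the kernel of $X\oplus Y\to X+Y$ being closed for the same reason. So your closing alternative --- reduce the whole assertion to ${\cal E}$ --- should be promoted from a remark to the actual proof of the middle-product and sum closures; with that rearrangement the proposal is correct and coincides with the paper's intended argument.
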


\begin{proof} The proof of \cite[Proposition 4.6]{mt-lc} can be used here with only slight modifications for adapting it to the pair setting. Indeed, it suffices to replace    references  for associative algebras \cite[Lemma 4.5(i)-(ii)]{mt-lc} by Proposition \ref{ideal singular pares asociativos}(i)-(iii), and use \cite[Proposition 3.2]{goldie pares} and \cite[Proposition 3.3(iv)]{gs comm alg32}.
\end{proof}

\section{PI-elements of  special Jordan pairs.}

PI-elements play an important role in the local LC-theory of Jordan algebras developed in \cite{mt-lc}. To tackle the local LC-theory for Jordan pairs, we study in this section the relationship between  the PI-ideal of a strongly prime Jordan pair of hermitian type and that of its $\ast$-tight associative pair envelope, thus extending to Jordan pairs results originally given for special Jordan algebras in  \cite[Theorem 6.5]{pi-i}.

\begin{lema}\label{amples} Let  $V$ be a special Jordan pair, subpair of $H(A,\ast)$  for  an associative pair $A$ with involution $\ast$,                                                             and let  ${\cal H}(X)$
be a hermitian ideal of the free  special Jordan pair $FSJP[X]$.
Then:
\begin{enumerate}\item[(i)] for any ideal  $I$ of $V$ the $n$-tads
$$\{\ldots, {\cal
H}(V)^\sigma,I^{-\sigma},\big({\cal H}(V)^{(2\rangle}\big)^\sigma,\big({\cal H}(V)^{(2\rangle}\big)^{-\sigma},{\cal
H}(V)^{ \sigma},\ldots\}$$  are in $I$ for all odd  $n$.
\item[(ii)] if $I={\cal H}(V)^{(2\rangle}$, for any   $a\in V^\sigma$
the $n$-tads $\{I^\sigma,\ldots,I^{-\sigma},a\}$  and
$\{I^{-\sigma},\ldots,I^{-\sigma},a,I^{-\sigma}\}$ are in $I$  for
all odd $n$.
\end{enumerate}
\end{lema}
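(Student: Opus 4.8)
The plan is to prove both statements by induction on the odd tad-length $n$, exploiting the fact that in a special Jordan pair an associative $n$-tad of symmetric elements is not a Jordan element in general, but becomes expressible through the operators $Q$ and $D$ as soon as enough of its factors are drawn from a hermitian ideal and its hulls. Two ingredients drive the argument. The first is the collection of tad-reduction identities --- linearizations of the fundamental Jordan pair identities JPn of \cite{loos-jp} --- that rewrite an $n$-tad as a $\Phi$-combination of $Q_x$- and $D_{x,y}$-images of shorter tads together with $n$-tads whose inner factors have been transposed; the latter terms do not immediately shorten the tad, which is where the hermitian machinery must intervene. The second is the absorption at our disposal: $I$, ${\cal H}(V)$ and its hull ${\cal H}(V)^{(2\rangle}$ are all ideals of $V$, hence absorb $Q_V(\cdot)$, $Q_{(\cdot)}V$ and $\{V,V,\cdot\}$, and, crucially, the hermitian ideal ${\cal H}(V)$ eats tads, so that a tad one of whose factors lies in ${\cal H}(V)$ reduces to a genuine Jordan expression landing in an appropriate hull ${\cal H}(V)^{(m\rangle}$, the inclusions $I^{(n+1\rangle}\subseteq I^{\langle n\rangle}\subseteq I^{(n\rangle}\subseteq I^{(n)}$ of \cite[p.~210--211]{da-hermitian} (applied to $I={\cal H}(V)$) letting us pass between the various derived and hull levels.

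For part (i) the base of the induction is the shortest admissible configuration, where the tad collapses to a single $D$- or $Q$-product one of whose arguments is drawn from $I$; for instance $\{{\cal H}(V)^\sigma, I^{-\sigma}, {\cal H}(V)^\sigma\}=D_{{\cal H}(V)^\sigma, I^{-\sigma}}{\cal H}(V)^\sigma$ lies in $I$ because $I$ is an ideal and one inner factor belongs to $I^{-\sigma}$. For the inductive step I would strip the two outer factors $h,h'\in{\cal H}(V)$ off the $n$-tad via the pentad identity. This yields, on the one hand, $Q_h$- and $D_{h,\cdot}$-images of an $(n-2)$-tad that still displays the required pattern --- outer factors in ${\cal H}(V)$, a single factor in $I$, interior factors in ${\cal H}(V)^{(2\rangle}$ --- to which the inductive hypothesis applies, the resulting element then lying in $I$ by ideal absorption; and, on the other hand, residual tads of the same length whose interior factors have been transposed, which the tad-eating property of ${\cal H}(V)$ together with the hull inclusions send into $I$. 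The reason for taking the interior factors in the second hull ${\cal H}(V)^{(2\rangle}$ rather than in ${\cal H}(V)$ itself is precisely that this level is deep enough to absorb the loss of hull depth incurred at each reduction, so that the recursion closes inside $I$.

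Part (ii) I would deduce from (i) by specializing $I={\cal H}(V)^{(2\rangle}$. Unfolding a factor of ${\cal H}(V)^{(2\rangle}={\cal H}(V)^{\langle2\rangle}+Q_V{\cal H}(V)^{\langle2\rangle}$ through the iterated products ${\cal H}(V)^{\langle2\rangle}=Q_{{\cal H}(V)^{\langle1\rangle}}{\cal H}(V)^{\langle1\rangle}$ and ${\cal H}(V)^{\langle1\rangle}=Q_{{\cal H}(V)}{\cal H}(V)$, each $I$-factor neighbouring $a$ exposes genuine ${\cal H}(V)$-factors. After this expansion both $\{I^\sigma,\dots,I^{-\sigma},a\}$ and $\{I^{-\sigma},\dots,a,I^{-\sigma}\}$ acquire outer factors in ${\cal H}(V)$ and interior factors in ${\cal H}(V)^{(2\rangle}$, with $a$ now occupying an interior slot flanked by these deep hermitian factors; they therefore match the configuration of (i) and land in $I={\cal H}(V)^{(2\rangle}$.

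The main obstacle will be the bookkeeping in the inductive step of (i): after each application of a tad-reduction identity one must certify that the shorter and the transposed tads still carry their outermost factors in ${\cal H}(V)$ and their interior factors at hull level exactly $(2\rangle$, and not at some shallower, uncontrolled level, so that both the inductive hypothesis and the ideal absorption of $I$ remain in force. This forces one to check that the hermitian identities remain Jordan-valid in the pair setting, where products alternate between $V^+$ and $V^-$ and only odd-length tads return to $V^\sigma$, and to track the hull depth through every rewriting; the inclusions of \cite{da-hermitian} together with the semi-ideal closure of the products $Q_IL$ recorded in \cite{mc-prime-inh} are exactly the tools that keep this depth under control.
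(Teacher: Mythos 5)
Your overall strategy for (i) --- induction on the tad length, stripping the two outer hermitian factors, and disposing of the leftovers by tad-closure of ${\cal H}$ --- breaks down at the decisive point. In associative terms, stripping the outer factors gives $\{h,x_2,\ldots,x_{n-1},h'\}=Q_{h,h'}\{x_2,\ldots,x_{n-1}\}-\{h',x_2,\ldots,x_{n-1},h\}$: the residual term is an $n$-tad of exactly the same pattern as the one you started with (merely mirrored), so the recursion you describe does not terminate; moreover the shortened $(n-2)$-tad no longer has its outer factors in ${\cal H}(V)$ --- its new outer entries are the former second and penultimate entries, which may be the $I$-entry or a hull entry --- so your inductive hypothesis does not apply to it either. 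Worse, your disposal of the residual same-length tads (``the tad-eating property of ${\cal H}(V)$ together with the hull inclusions send them into $I$'') cannot work even in principle: hermitian tad-closure yields membership in hermitian sets ${\cal H}(\cdot)$ and their hulls, never in an arbitrary ideal $I$. Membership in $I$ can only come from the single $I$-entry, and the only mechanism that extracts it --- the one your sketch never isolates --- is the formal-variable argument of \cite[Lemma 2.1]{acm}: replace the $I$-entry by a new free variable $y$ (and the $V$-coefficient inside the hull by a second variable $z$), show that the tad, as an element of the free special Jordan pair on $X\cup\{y,z\}$, lies in ${\cal H}(X\cup\{y,z\})$ and is therefore a Jordan polynomial, linear in $y$, and then evaluate $y$ in $I$, using that ideals absorb values of Jordan monomials involving $y$.

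That is precisely what the paper's proof does, with no induction and no length reduction at all. Writing the hull entries as $Q_st$ with $s,t\in{\cal H}(V)^{\langle1\rangle}$, as $Q_zQ_ts$, and as $r=Q_pq$ with $p,q\in{\cal H}(V)$ --- this is exactly where the depth $(2\rangle$ is used, via ${\cal H}(V)^{(2\rangle}\subseteq{\cal H}(V)^{\langle1\rangle}=Q_{{\cal H}(V)}{\cal H}(V)$ and ${\cal H}(V)^{\langle2\rangle}=Q_{{\cal H}(V)^{\langle1\rangle}}{\cal H}(V)^{\langle1\rangle}$ --- the two displayed identities fold the foreign variables into Jordan products $\{y,s,t\}$, $Q_sy$, $\{y,z,Q_ts\}$, $Q_pz$, $Q_pQ_zy$, so that each resulting $n$-tad has \emph{all} of its entries in the ideal ${\cal H}(X\cup\{y,z\})$ of the free pair; tad-closure of the hermitian ideal then makes the whole expression a Jordan polynomial, and the evaluation argument of \cite[Lemma 2.1]{acm} yields (i). Part (ii) is then trivial for $n<5$ because $I$ is an ideal, and follows from (i) for $n\geq5$. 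Your intuition about why the second hull is the right depth, and your reduction of (ii) to (i) by unfolding the $I$-factors adjacent to $a$ so that $a$ occupies the free-variable slot, are both in the right spirit; but without the hermitian-polynomial-in-extra-variables mechanism, the core of (i) --- landing in the ideal $I$ rather than merely in some hermitian hull --- remains unproved.
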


\begin{proof}   Take  $s\in \big({\cal H}(V\big)^{\langle1\rangle})^\sigma$,
$t\in \big({\cal H}(V)^{\langle1\rangle}\big)^{-\sigma}$, $r\in \big({\cal H}(V)^{(2\rangle}\big)^{-\sigma}$ and   variables   $y\not\in X^{-\sigma}$,  $z\not\in X^{\sigma}$  (where    $X$  is considered to be a polarized
set of variables).  Note that, since ${\cal H} (V)^{(2\rangle}\subseteq {\cal H} (V)^{\langle1\rangle}\subseteq {\cal H} (V)^{(1)}$, we can write $r=Q_pq\in {\cal H} (V)^{\langle1\rangle}  = Q_{{\cal H} (V)}{\cal H} (V)$. Now:
\begin{align*}  \{\ldots,{\cal H }(X)^\sigma,y,Q_st,  \big({\cal H}(V)^{(2\rangle}\big)^{-\sigma},& \ldots\}=\\
=& \{\ldots,{\cal H }(X)^\sigma,\{y,s,t\},s,\big({\cal H}(V)^{(2\rangle}\big)^{-\sigma},\ldots\}-\\
  -&\{\ldots,{\cal H}(X)^\sigma, t,Q_sy,\big({\cal H}(V)^{(2\rangle}\big)^{-\sigma},\ldots\}\end{align*} and
\begin{align*}
   \{\ldots, {\cal H }(X)^\sigma,y,Q_zQ_ts,r, &{\cal
H}(X)^{\sigma}, \ldots\}=\\
&=\{\ldots ,{\cal H}(X)^{\sigma}, \{y,z,Q_ts\},z,Q_pq, {\cal H}(X)^{\sigma},\ldots\}-\\
&-\{\ldots, {\cal H}(X)^{\sigma},Q_ts,Q_zy,Q_pq,{\cal H}(X)^{\sigma},\ldots\}=\\
&=\{\ldots,{\cal H}(X)^{\sigma},\{y,z,Q_ts\},\{z,p,q\},p,{\cal H}(X)^{\sigma},\ldots\}-\\
&-\{\ldots,{\cal H}(X)^{\sigma},\{y,z,Q_ts\},q,Q_pz,{\cal H}(X)^{\sigma},\ldots\}-\\
&-\{\ldots,{\cal H}(X)^{\sigma},Q_ts,\{Q_zy,p,q\},p,{\cal H}(X)^{\sigma},\ldots\}+\\
&+ \{\ldots,{\cal H}(X)^{\sigma},Q_ts,q,Q_pQ_zy,{\cal H}(X)^{\sigma},\ldots\}
\end{align*}
are sums of $n$-tads ($n$ odd) elements of  ${\cal H}(X\cup\{y, z\})$, so they are Jordan polynomials in $FSJP(X\cup\{y,z \})$, and the proof of \cite[Lemma 2.1]{acm} applies here yielding (i).

Finally, we note  that  if  $n<5$, then   (ii)  is trivial, since $I$ is an ideal of $V$, and for  $n\geq 5$ it follows from (i) above.
 \end{proof}

The next result states that local algebras of ample subpairs of associative pairs with involution retain ampleness in the set of symmetric elements of the local algebra with the induced involution.

\begin{corolario}\label{corolario-amples} Let $V=H_0(A,\ast)$  be an ample subpair of an associative pair $A$ with an involution $\ast$.
Then for any  $a\in V^{-\sigma}$ the local algebra  $V^\sigma_a$ is an ample subspace of $H(A^\sigma_a,\ast)$,  $\sigma=\pm$.\end{corolario}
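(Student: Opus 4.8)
The plan is to realize $V^\sigma_a$ concretely as a $\Phi$-submodule of $H(A^\sigma_a,\ast)$ and then check the three defining conditions of an ample subspace by lifting computations back to $A^\sigma$ and invoking the ampleness of the subpair $V=H_0(A,\ast)$. First I would record that $a\in V^{-\sigma}=H_0(A^{-\sigma},\ast)$ is symmetric, $a^\ast=a$. This makes $\ast$ an involution of the homotope $(A^\sigma)^{(a)}$, since for $x,y\in A^\sigma$ one has $(x\cdot_a y)^\ast=(xay)^\ast=y^\ast a x^\ast=y^\ast\cdot_a x^\ast$, and $Ker_A(a)=\{x\in A^\sigma\mid axa=0\}$ is $\ast$-invariant ($axa=0$ gives $ax^\ast a=0$). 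Hence $\ast$ descends to the local algebra $A^\sigma_a$ and $H(A^\sigma_a,\ast)$ is defined.

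Next I would identify $V^\sigma_a$ inside $A^\sigma_a$. Because the products of $V$ are those inherited from $A^{(+)}$, the Jordan kernel $Ker_V(a)=\{x\in V^\sigma\mid axa=0,\ a(xax)a=0\}$ coincides with $V^\sigma\cap Ker_A(a)$ (as $axa=0$ forces $axaxa=0$), so the inclusion $V^\sigma\hookrightarrow A^\sigma$ induces an injection $V^\sigma_a\hookrightarrow A^\sigma_a$ compatible with $\ast$; its image is a $\Phi$-submodule of $H(A^\sigma_a,\ast)$ since every element of $V^\sigma=H_0(A^\sigma,\ast)$ is symmetric. (Using the standard identification of the local Jordan algebra of $A^{(+)}$ at $a$ with $(A^\sigma_a)^{(+)}$, this embedding also respects the Jordan structure, so the submodule really is $V^\sigma_a$ as a Jordan algebra.) With this in hand I would verify ampleness for an arbitrary $\bar b\in A^\sigma_a$, lifted to $b\in A^\sigma$, and an arbitrary $\bar h\in V^\sigma_a$, lifted to $h\in V^\sigma$. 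For the trace, $\bar b+\bar b^\ast=\overline{b+b^\ast}=\overline{\{b\}}$ lies in $V^\sigma_a$ because ample subpairs contain the traces $\{b\}\in H_0(A^\sigma,\ast)$. For the norm, $\bar b\cdot_a\bar b^\ast=\overline{bab^\ast}$, and since $a\in H_0(A^{-\sigma},\ast)$ the sandwich condition $c\,H_0(A^{-\sigma},\ast)\,c^\ast\subseteq H_0(A^\sigma,\ast)$ with $c=b$ yields $bab^\ast\in V^\sigma$. For the triple product, $\bar b\cdot_a\bar h\cdot_a\bar b^\ast=\overline{bahab^\ast}=\overline{b(aha)b^\ast}$; here I would apply the opposite-sign sandwich condition $c\,H_0(A^\sigma,\ast)\,c^\ast\subseteq H_0(A^{-\sigma},\ast)$ with $c=a$ (and $a^\ast=a$) to get $aha\in H_0(A^{-\sigma},\ast)=V^{-\sigma}$, and then the sandwich condition with $c=b$ to conclude $b(aha)b^\ast\in H_0(A^\sigma,\ast)=V^\sigma$. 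All three images lie in $V^\sigma_a$, which is exactly ampleness, and the opposite choice of sign is identical by symmetry.

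The verification of the three identities is routine; the part requiring care is the bookkeeping that glues everything together, namely checking that $a$ being symmetric lets $\ast$ pass to the local algebra and that $Ker_V(a)=V^\sigma\cap Ker_A(a)$, so that $V^\sigma_a$ genuinely embeds into $H(A^\sigma_a,\ast)$ rather than merely mapping there. The only genuine computational point is the triple-product case, where one must rewrite $bahab^\ast$ as $b(aha)b^\ast$ and use the sandwich condition twice, in opposite parts, first producing the ample middle factor $aha\in H_0(A^{-\sigma},\ast)$ and only then sandwiching it by $b$ and $b^\ast$.
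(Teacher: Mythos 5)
Your proof is correct, but it takes a genuinely more elementary route than the paper. The paper disposes of this corollary in one line, deriving it from Lemma \ref{amples} (the $n$-tad lemma for hermitian ideals) and pointing to the algebra-case proof in \cite[Lemma 2.3]{acm}; that machinery is designed for the harder situations appearing later (Theorem \ref{relacion pi-ideal hermitiano}, Lemma \ref{amples-tightness JP}), where the subpair in question is generated by a hermitian ideal together with an extra element, so that showing products land back inside it requires genuine $n$-tad manipulations. Your argument instead exploits that here $V$ is the \emph{full} ample subpair $H_0(A,\ast)$: once you observe that $a^\ast=a$ makes $\ast$ descend to $A^\sigma_a$ (the kernel $\{x\in A^\sigma \mid axa=0\}$ is $\ast$-invariant and $(x\cdot_a y)^\ast=y^\ast\cdot_a x^\ast$), that $Ker_V(a)=V^\sigma\cap Ker_A(a)$ because $axa=0$ forces $axaxa=0$, and that the local products lift to plain associative words, the three ampleness conditions follow directly from the ample-subpair axioms: the trace axiom for $\overline{b+b^\ast}$, the sandwich axiom with middle factor $a\in H_0(A^{-\sigma},\ast)$ for the norm $\overline{bab^\ast}$, and the sandwich axiom applied twice, first with $c=a$ to produce $aha\in H_0(A^{-\sigma},\ast)$ and then with $c=b$, for $\overline{bahab^\ast}$. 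All steps check out, and the two bookkeeping points you flagged, the descent of $\ast$ and the identification of kernels making $V^\sigma_a\to H(A^\sigma_a,\ast)$ an embedding rather than merely a well-defined map, are precisely the ones that need saying. What the paper's route buys is uniformity, since the same $n$-tad lemma covers this corollary and its refinements where a direct appeal to the ample axioms of the whole pair is unavailable; what your route buys is a self-contained verification that makes no use of hermitian ideals at all.
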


\begin{proof} The proof is straightforward from Lemma \ref{amples} (see \cite[Lemma 2.3]{acm} for the algebra case).\end{proof}

The proof of the next result follows that of \cite[Theorem 6.5]{pi-i}.  Here however we need  to tackle the fact that, as   noted in \ref{derived ideals}, products of ideals of Jordan pairs are not again ideals, merely semi-ideals. We will use Lemma \ref{amples} to overcome this additional difficulty on the generation of powers of ideals.

\begin{teorema}\label{relacion pi-ideal hermitiano} Let   $V$  be a strongly prime Jordan pair with a    $\ast$-tight associative envelope $A$. If $V$
is of hermitian type, then $PI(V)=PI(A)\cap V$. \end{teorema}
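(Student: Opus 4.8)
The plan is to reduce the equality $PI(V)=PI(A)\cap V$ to an elementwise comparison of local algebras: for each $a\in V^{-\sigma}$ I will show that the Jordan local algebra $V_a^\sigma$ is PI if and only if the associative local algebra $A_a^\sigma$ is PI. Since $a=a^\ast$ for $a\in V^{-\sigma}\subseteq H(A^{-\sigma},\ast)$, the involution $\ast$ passes to the homotope $(A^\sigma)^{(a)}$ and hence to $A_a^\sigma$, and the canonical map $V_a^\sigma\to (A_a^\sigma)^{(+)}$ is an injection onto a Jordan subalgebra of $H(A_a^\sigma,\ast)$ (nondegeneracy of $V$ gives $Ker_V(a)=V^\sigma\cap Ker_A(a)$). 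I also record that, as recalled in the preliminaries, $A_a^\sigma\cong {\cal A}_a$, and that $\ast$-tightness of $A$ over the strongly prime $V$ forces $A$, equivalently ${\cal E}$ and ${\cal A}$, to be prime, so that each $A_a^\sigma$ is a prime associative algebra.

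For the inclusion $PI(A)\cap V\subseteq PI(V)$ I would argue directly. If $a\in V^{-\sigma}$ is such that $A_a^\sigma$ is PI, then $(A_a^\sigma)^{(+)}$ is a PI Jordan algebra, so its subalgebra $H(A_a^\sigma,\ast)$ is PI, and therefore so is the Jordan subalgebra $V_a^\sigma$. Hence $a\in PI(V^{-\sigma})$, and this inclusion requires nothing about the hermitian type.

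The substantial inclusion is $PI(V)\subseteq PI(A)\cap V$, i.e. that $V_a^\sigma$ PI forces $A_a^\sigma$ PI; here the argument mirrors the algebra case \cite[Theorem 6.5]{pi-i} and exploits that $V$ is of hermitian type. I would fix a hermitian ideal ${\cal H}(V)\neq0$ and set ${\cal H}={\cal H}(V)^{(2\rangle}$, a nonzero ideal of $V$ by strong primeness (using the equivalences in \ref{derived ideals}). Let $B$ be the $\ast$-ideal of $A$ generated by ${\cal H}$; since ${\cal H}$ is ample in $H(B,\ast)$, Corollary \ref{corolario-amples} shows that ${\cal H}_a$ is an ample subspace of $H(B_a,\ast)$, where $B_a$ is a $\ast$-ideal of $A_a^\sigma$. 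Moreover ${\cal H}_a$, being a subalgebra of the PI algebra $V_a^\sigma$, is itself PI. I would then invoke the classical associative result (in the spirit of Amitsur's theorem) that a prime ring with involution having a nonzero $\ast$-ideal whose symmetric elements satisfy a polynomial identity is itself PI, concluding that $A_a^\sigma$ is PI, i.e. $a\in PI(A^{-\sigma})$.

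The main obstacle is exactly the pair-theoretic difficulty flagged before the statement: products of ideals of a Jordan pair are only semi-ideals (see \ref{derived ideals}), so one cannot freely form and localize powers of the hermitian ideal ${\cal H}(V)$ as in the algebra setting. This is where Lemma \ref{amples} is essential: it guarantees that the relevant $n$-tads built from ${\cal H}(V)$ and ${\cal H}(V)^{(2\rangle}$ remain inside the ideal, which is what makes ${\cal H}$ behave as a genuine ample subpair and lets the identification of ${\cal H}_a$ with an ample subspace of $H(B_a,\ast)$ (Corollary \ref{corolario-amples}) go through. A secondary point to check carefully is that $B_a$ is nonzero and that $A_a^\sigma$ carries the correct induced involution and remains prime, all of which follow from the strong primeness of $V$ together with the $\ast$-tightness of $A$.
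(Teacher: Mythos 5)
Your easy inclusion $PI(A)\cap V\subseteq PI(V)$ is fine and agrees with the paper, which simply cites \cite[Lemma 6.4(b)]{pi-i}. The hard direction, however, has a genuine gap at its load-bearing step: you take $B$ to be the $\ast$-ideal of $A$ generated by ${\cal H}={\cal H}(V)^{(2\rangle}$ and assert that ${\cal H}$ is ample in $H(B,\ast)$. This is unjustified, and in general false. Ampleness requires the traces $b+b^\ast$ to lie in ${\cal H}$ for \emph{every} $b\in B$; but a typical element of the $\ast$-ideal generated by ${\cal H}$ is a sum of products $xhy$ with $x,y$ arbitrary odd products of elements of $V$, so its trace is an $n$-tad with a single entry in ${\cal H}$ and arbitrarily many entries from $V$. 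Hermitian absorption, as encoded in Lemma \ref{amples}, controls only $n$-tads whose entries lie in ${\cal H}(V)$ and its derived spaces with \emph{at most one} external element; that limitation is exactly why the argument cannot pass to the full associative ideal. For the same reason your appeal to Corollary \ref{corolario-amples} does not apply as stated: the corollary localizes at an element $a$ of the ample subpair, whereas your $a$ need not lie in $B$ at all --- coping with an external localizing element is precisely the content of Lemma \ref{amples}(ii), and it forces one to work with the subpair generated by $I\cup\{a\}$. (Your associative endgame --- Amitsur/Rowen plus ``nonzero PI $\ast$-ideal forces PI'' --- would be fine if the ampleness held, so the gap is concentrated in this step. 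A secondary error: $\ast$-tightness over a strongly prime $V$ makes $A$ only $\ast$-prime, not prime; exchange-type envelopes $B\oplus B^{\rm op}$ of $V=B^{(+)}$, $B$ prime non-PI, are $\ast$-tight but not prime, so ``each $A^\sigma_a$ is a prime associative algebra'' is also unjustified, though patchable with $\ast$-prime versions.)

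Contrast this with what the paper actually does, which is designed to circumvent exactly this obstacle. It forms the Jordan subpair $S$ of $V$ generated by $I={\cal H}(V)^{(2\rangle}$ and the single element $a$, lets $R$ be the associative subpair of $A$ generated by $S$ and $B$ the subpair generated by $I$; then Lemma \ref{amples} does yield $S=H_0(R,\ast)$ and $I=H_0(B,\ast)$, because all $n$-tads arising involve at most the one external element $a$. Since $S^{-\sigma}_a$ is PI and equals $H_0(R^{-\sigma}_a,\ast)$ by Corollary \ref{corolario-amples}, Rowen's theorem gives $R^{-\sigma}_a$ PI. But now $R$ is merely a \emph{subpair} of $A$, not an ideal, so no ``PI ideal in a ($\ast$-)prime ring'' argument is available; the paper instead encodes local PI-ness at $a$ as a GPI of the standard imbedding ${\cal E}_R$, transfers it to ${\cal E}_A$ through the Martindale identification ${\cal E}_A\subseteq Q_s({\cal E}_A)=Q_s({\cal E}_R)$ (resting on the Jordan Memory Theorem and the $\ast$-tightness of $B$ over $I$), and reads it back as $({\cal E}_A)_a\cong A^{-\sigma}_a$ being PI. Your proposal has no substitute for this GPI transfer once the ampleness claim collapses, so as written the proof does not go through.
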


\begin{proof} The containment $PI(A)\cap V\subseteq PI(V)$ was proved in   \cite[Lemma~6.4(b)]{pi-i} for  special Jordan systems (including Jordan pairs).

To prove the reverse inclusion take  $a\in PI(V^\sigma)$ and suppose ${\cal H}(V)\neq0$ for a  hermitian ideal  ${\cal H}(X)$  of  $FSJP[X]$. Set $I={\cal H}(V)^{(2\rangle}$  (see \ref{derived ideals}) and denote by $S$ the Jordan subpair of $V$ generated by $I$ and the element $a$,  by  $R$  the associative subpair of $A$ generated by $S$ and by  $B$ the subpair of $R$ generated by  $I$. Then $S$ is strongly prime, since it contains the nonzero ideal $I$ of $V$ (the proof of \cite[Lemma 2.4]{acm} applies here), and it follows from Lemma \ref{amples}  that $S=H_0(R,\ast)$ and $I=H_0(B,\ast)$. Moreover, by  \cite[Lemma 2.32]{da-hermitian}, and as a result of the correspondence between Jordan pairs and polarized triple systems (see \cite[Section 5]{da-hermitian} for further details), we have that  $B$ is $\ast$-tight over $I$.

Now by \cite[Jordan Memory Theorem 4.2]{mc-martindale} we have $ I=H_0(B,\ast)\lhd V\subseteq H(Q_s(B),\ast)$,   where $Q_s(B)$ is the pair of Martindale symmetric quotients of $B$. Moreover we have $B\subseteq R\subseteq Q_s(B)$. Hence  $R$ is semiprime  by\cite[Lemma 2.13(ii)]{gs comm alg32}.

Next, since $a\in PI(V)$, the local algebra $S^{-\sigma}_a$ which is a subalgebra of the PI-algebra $V^{-\sigma}_a$, is itself a Jordan PI-algebra, and since by Corollary \ref{corolario-amples} we have $S^{-\sigma}_a=H_0(R^{-\sigma}_a,\ast)$, it follows from \cite[Corollary 7.4.13]{rowen-pi}  that  $R^{-\sigma}_a$ is an associative PI-algebra.

Consider now the standard imbedding ${\cal E}_R$ of the associative pair $R$  (see \ref{assoc-imbedding}). Since $({\cal E }_R) \cong R^{-\sigma}_a$ by \cite[Proposition 3.4(3)]{ft} , we have that the associative algebra   ${\cal E}_R$  satisfies a GPI. (Note also that $a\in PI(R)=PI({\cal E}_R)\cap R$  by \cite[Proposition 2.3]{mp-lancaster}.) But then, as a result of  ${\cal E}_A\subseteq Q_s( {\cal E}_A)=Q_s({\cal
E}_R)$, we   have that the standard imbedding  ${\cal E}_A$ of the associative pair $A$ satisfies the same GPI as ${\cal E}_R$ does. Therefore, we finally obtain that $a\in PI(A^\sigma )$ since we have $({\cal E}_A)_a \cong A_a^{-\sigma}$.
 \end{proof}

\section{Lesieur-Croisot elements of nondegenerate Jordan pairs }

This section is devoted to study the set of LC-elements of a Jordan pair, concretely to prove that this set is an ideal if $V$  is nondegenerate. The proofs of the results  in this section that closely follow those given in \cite{mt-lc} for Jordan algebras are  just outlined. Yet, all necessary references relative to (associative and Jordan) pairs are however provided.

\begin{apartado}\label{def-LCelement JP} Let $V$ be a Jordan pair. We will say that an element $x\in V^\sigma$, $\sigma=\pm$ is a {\sl Lesieur Croisot element} of $V$ (and LC-element for short) if the local algebra $V^{-\sigma}_x$ of $V$ at $x$ is a Lesieur-Croisot Jordan algebra (see \ref{def LC element JA}).  We will denote by  $LC(V)=(LC(V^+),LC(V^-))$ the set of LC-elements of a Jordan pair $V$.
\end{apartado}

We start  stating that the set of LC-elements of any strongly prime Jordan pair with nonzero PI-elements  coincides with its PI-ideal.

\begin{proposicion}\label{LC-if-PIelements} Let $V$ be a strongly prime Jordan pair having nonzero PI-elements. Then $LC(V)=PI(V)$, and in particular, $LC(V)$ is an ideal of $V$.
\end{proposicion}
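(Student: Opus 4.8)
The plan is to establish the two inclusions $PI(V)\subseteq LC(V)$ and $LC(V)\subseteq PI(V)$ separately, in each case reducing the problem to the local algebras $V^{-\sigma}_x$, since by \ref{def-LCelement JP} and the definition of PI-element both notions are read off from the \emph{same} local algebra $V^{-\sigma}_x$. The final ``in particular'' is then immediate: once $LC(V)=PI(V)$ is known, $LC(V)$ is an ideal because $PI(V)$ is an ideal of any nondegenerate Jordan pair (recalled in the preliminaries, following \cite[Theorem 5.4]{pi-i}). Two standing remarks will be used throughout. First, since $V$ is strongly prime with $PI(V)\neq0$, the ideal $PI(V)$ is \emph{essential}: a nonzero ideal $I$ of a prime Jordan pair meets every nonzero ideal $L$ because $Q_{I^\sigma}L^{-\sigma}\subseteq I^\sigma\cap L^\sigma$ would otherwise vanish, contradicting primeness; hence by the characterization of essentiality in \ref{annihilators}, $ann_V(PI(V))=0$. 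Second, by Corollary \ref{corolario ideal singular}(iv), $V$ is nonsingular. For $x=0$ both memberships are trivial, so I only treat $0\neq x$.

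For $PI(V)\subseteq LC(V)$, I take $x\in PI(V^\sigma)$. As $V$ is nondegenerate, $V^{-\sigma}_x$ is nondegenerate, and it is a PI-algebra by the very definition of PI-element; moreover, local algebras of strongly prime Jordan pairs at nonzero elements are again strongly prime, so $V^{-\sigma}_x$ is a \emph{strongly prime PI} Jordan algebra. By the PI-structure theory for Jordan algebras such an algebra is a classical order in a simple Jordan algebra of finite capacity (equivalently it is Goldie), and is therefore an LC-algebra, either by \ref{def LC element JA} or directly by the characterization in Theorem \ref{th-FGM goldie LC JA}. Hence $x\in LC(V^\sigma)$.

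For the reverse and harder inclusion, take $0\neq x\in LC(V^\sigma)$, so that $V^{-\sigma}_x$ is an LC-algebra, i.e.\ a classical order in a nondegenerate unital Jordan algebra $Q$ of finite capacity. Since $V^{-\sigma}_x$ is again strongly prime, Theorem \ref{th-FGM goldie LC JA} forces $Q$ to be \emph{simple}. The key intermediate step is to produce a nonzero PI-element inside $V^{-\sigma}_x$. Arguing as for the singular ideal in Proposition \ref{JP ideal singular prop}(i), the image $(PI(V^{-\sigma})+Ker\,x)/Ker\,x$ of the ideal $PI(V)$ in $V^{-\sigma}_x$ consists of PI-elements of $V^{-\sigma}_x$; and this image is nonzero, for otherwise $PI(V^{-\sigma})\subseteq Ker\,x=\{z\in V^{-\sigma}\mid Q_xz=0\}$ would give $Q_xPI(V^{-\sigma})=0$, placing $x$ in $ann_V(PI(V^\sigma))=0$ by \ref{annihilators} and the essentiality noted above, a contradiction. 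Thus $PI(V^{-\sigma}_x)\neq0$.

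Finally I spread PI-ness to all of $V^{-\sigma}_x$. The nonzero PI-element of the order $V^{-\sigma}_x$ should persist in its algebra of quotients $Q$ (local algebras of a classical order are again orders in the corresponding local algebras of $Q$, and algebras of quotients of PI Jordan algebras are PI), so $PI(Q)\neq0$; as $PI(Q)$ is an ideal of the simple algebra $Q$, we get $PI(Q)=Q$, that is, $Q$ is a PI-algebra, whence its subalgebra $V^{-\sigma}_x$ is PI and $x\in PI(V^\sigma)$. I expect this last transfer to be the main obstacle: one must verify that a nonzero PI-element of the order $V^{-\sigma}_x$ survives in the finite-capacity quotient $Q$, so that the simplicity of $Q$ can be invoked to propagate PI-ness, and this rests on the two supporting pair-analogues (local algebras of strongly prime pairs are strongly prime, and PI-elements push forward to local algebras) of the Jordan algebra facts used in \cite{mt-lc} and \cite{fgm}.
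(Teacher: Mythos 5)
Your skeleton coincides with the proof the paper intends (the paper's own proof is just the citation of \cite[Proposition 3.3]{mt-lc}, whose algebra argument has exactly your shape): the two inclusions via the same local algebra $V^{-\sigma}_x$, the fact that strongly prime PI Jordan algebras are classical orders in simple algebras of finite capacity for $PI(V)\subseteq LC(V)$, and essentiality of $PI(V)$ to produce nonzero PI-elements of $V^{-\sigma}_x$ for the converse. One small repair along the way: the inclusion $(PI(V^{-\sigma})+Ker\,x)/Ker\,x\subseteq PI(V^{-\sigma}_x)$ is not obtained by ``arguing as for the singular ideal in Proposition \ref{JP ideal singular prop}(i)'', which rests on an annihilator computation; the correct mechanism is the transitivity of local algebras --- for $y\in PI(V^{-\sigma})$ the local algebra of $V^{-\sigma}_x$ at $\overline{y}$ is isomorphic to a local algebra of $V$ at an element of the ideal $PI(V)$ (of the form $Q_yx$), hence is PI --- which is how \cite{pi-i} handles such images. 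The fact is true and citable, so this is a mis-attribution rather than a gap.

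The genuine gap is the one you flag yourself: the transfer $PI(V^{-\sigma}_x)\neq0\Rightarrow PI(Q)\neq0$. Neither of your two supporting parentheticals carries the weight. The claim that local algebras of a classical order are orders in the corresponding local algebras of $Q$ is a substantial theorem --- it is essentially the subject of \cite{mp-localorders} --- and cannot be assumed in passing; and the claim that ``algebras of quotients of PI Jordan algebras are PI'' is false for general algebras of quotients in the sense of \ref{aq} (maximal algebras of quotients of PI algebras need not be PI); it holds only for the classical, finite-capacity quotient at issue, and proving that is precisely the missing content. A closing argument consistent with the tools of \cite{fgm} and \cite{mt-lc} avoids local algebras of $Q$ altogether: $PI(V^{-\sigma}_x)$ is a nonzero ideal of the strongly prime nondegenerate LC-algebra $V^{-\sigma}_x$, hence an essential inner ideal (if $K$ is an inner ideal with $K\cap I=0$ then $U_KI=0$ and $K\subseteq ann(I)=0$), so by Theorem \ref{th-FGM goldie LC JA} it contains an injective element $u$; injectivity gives $Ker\,u=0$, so the $u$-homotope of $V^{-\sigma}_x$ is itself a strongly prime PI algebra, while $u$ is invertible in $Q$; passing to the $u$-isotope of $Q$ (unital, simple, of finite capacity) and invoking the structure theory of strongly prime PI Jordan algebras as orders in simple PI algebras of finite capacity, together with the identification of finite-capacity quotients in \ref{quot FC}, one concludes that $Q$ is PI and hence so is its subalgebra $V^{-\sigma}_x$. (Note also that your step $PI(Q)=Q\Rightarrow Q$ PI implicitly uses unitality of $Q$, via $Q\cong Q_1$.) As written, then, the reverse inclusion $LC(V)\subseteq PI(V)$ is not established; everything else in your proposal is sound.
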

\begin{proof} The proof of \cite[Proposition 3.3]{mt-lc} works here. \end{proof}

\begin{proposicion}\label{lc-fv} Let $V$ a strongly prime Jordan pair such that $PI(V)=0$ and $LC(V)\neq0$. Then $V$ is nonsingular and $LC(V)=F(V)$, where $F(V)=(F(V^+),F(V^-))$  with $F(V^\sigma)=\{x\in V^\sigma\mid udim(V_x^{-\sigma})<\infty\}$.
\end{proposicion}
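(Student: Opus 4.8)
The plan is to treat the two assertions separately, using the hypothesis $LC(V)\neq0$ only to get nonsingularity started, and then deducing $LC(V)=F(V)$ from the Goldie characterization of LC-algebras recorded in \ref{def LC element JA} together with the local-global results on the singular ideal from Corollary \ref{corolario ideal singular}. Throughout, for $x\in V^\sigma$ I write $J=V^{-\sigma}_x$ for the local algebra of $V$ at $x$ (so that $x$ is read as an element of $V^{-(-\sigma)}$ and $J=V^{-\sigma}_x$ is its local algebra in the sense of the preliminaries).

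First I would establish that $V$ is nonsingular. Since $LC(V)\neq0$, fix a nonzero $x\in LC(V^\sigma)$, so that $J=V^{-\sigma}_x$ is an LC-algebra. The key subclaim is that \emph{every} LC-algebra is nonsingular: if $a$ were a nonzero element of $\Theta(J)$, then $ann_J(a)$ would be an essential inner ideal of $J$, so by Theorem \ref{th-FGM goldie LC JA} it would contain an injective element $s$; but $s\in ann_J(a)$ forces $U_s a=0$, contradicting injectivity of $U_s$. Hence $J=V^{-\sigma}_x$ is nonsingular, and Corollary \ref{corolario ideal singular}(iii) (strong primeness plus nonsingularity of a single nonzero local algebra) yields that $V$ is nonsingular. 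This is where strong primeness enters.

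Next I would prove the two inclusions. For $LC(V)\subseteq F(V)$, if $x\in LC(V^\sigma)$ then $J=V^{-\sigma}_x$ is a classical order in a nondegenerate unital Jordan algebra $Q$ of finite capacity; finite capacity bounds the uniform dimension of $Q$, and a classical order shares the uniform dimension of its algebra of quotients, so $udim(V^{-\sigma}_x)<\infty$, i.e. $x\in F(V^\sigma)$ (this inclusion uses neither strong primeness nor PI-lessness). For the reverse inclusion $F(V)\subseteq LC(V)$, take $x\in F(V^\sigma)$, so $udim(V^{-\sigma}_x)<\infty$. Since $V$ is now known to be nonsingular, Corollary \ref{corolario ideal singular}(ii) gives that $J=V^{-\sigma}_x$ is nonsingular, and it is nondegenerate because $V$ is (local algebras of nondegenerate Jordan pairs are nondegenerate). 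A nondegenerate Jordan algebra that is nonsingular and of finite uniform dimension is Goldie, and nondegenerate Goldie Jordan algebras are LC-algebras by \ref{def LC element JA}; thus $V^{-\sigma}_x$ is an LC-algebra and $x\in LC(V^\sigma)$. Combining both inclusions gives $LC(V)=F(V)$.

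I expect the main difficulty to be bookkeeping rather than conceptual depth: one must keep careful track of the polarity $\sigma$ when transferring the single-local-algebra hypotheses of Corollary \ref{corolario ideal singular} to the element $x\in V^\sigma$, whose local algebra lives on the opposite side $V^{-\sigma}$. Beyond that, the only points needing a genuine (if short) justification are the two auxiliary facts I would otherwise take as standard: that LC-algebras are nonsingular (handled above via the injective-element characterization) and that a classical order has the same uniform dimension as its finite-capacity algebra of quotients. Note that PI-lessness is not used directly in the equality $LC(V)=F(V)$; it delimits the present case, the complementary one being covered by Proposition \ref{LC-if-PIelements}.
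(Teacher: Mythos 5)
Your first two steps are sound and essentially coincide with the paper's route (which follows \cite[Proposition 4.2]{mt-lc}): the observation that an LC-algebra is nonsingular, because an essential annihilator $ann_J(a)$ would contain an injective $s$ with $U_sa=0$, combined with Corollary \ref{corolario ideal singular}(iii) (equivalently, Corollary \ref{corolario ideal singular}(i) plus strong primeness), correctly yields nonsingularity of $V$; and the inclusion $F(V)\subseteq LC(V)$ via inheritance of nonsingularity by local algebras (Corollary \ref{corolario ideal singular}(ii)), nondegeneracy, and the fact that nonsingular nondegenerate algebras of finite uniform dimension are Goldie, hence LC by \ref{def LC element JA}, is exactly the intended argument.

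The inclusion $LC(V)\subseteq F(V)$, however, contains a genuine error: it is \emph{not} true that finite capacity bounds uniform dimension, nor that this inclusion ``uses neither strong primeness nor PI-lessness.'' The paper itself records the counterexample: the Jordan algebra of a nondegenerate quadratic form on a vector space with infinite-dimensional totally isotropic subspaces is a simple unital nondegenerate algebra of capacity $2$ (hence trivially a classical order in a finite-capacity algebra, namely itself, so an LC-algebra, and one checks it is nonsingular), yet every one-dimensional isotropic subspace is an inner ideal, so its uniform dimension is infinite. Thus an LC local algebra $V^{-\sigma}_x$ need not have finite uniform dimension in general, and your claimed transfer ``classical order shares the uniform dimension of its finite-capacity algebra of quotients'' cannot rescue the step, since $udim(Q)$ itself may be infinite. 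The hypothesis $PI(V)=0$ is precisely what excludes this Clifford-type obstruction: one first passes PI-lessness down to the local algebra (by the local PI theory of \cite{pi-i}, $PI(V)=0$ forces $PI(V^{-\sigma}_x)=0$ for nondegenerate $V$), and then uses that quadratic-form components of the nondegenerate finite-capacity quotient $Q$ are PI, so a PI-less LC-algebra has a quotient algebra $Q$ without such components, for which finite capacity does bound uniform dimension; only then does $udim(V^{-\sigma}_x)<\infty$ follow. This is the content of the argument in \cite[Proposition 4.2]{mt-lc} that the paper's proof invokes, and it is the missing idea in your proposal; your closing remark that PI-lessness ``is not used directly in the equality $LC(V)=F(V)$'' is exactly where the proposal goes astray.
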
 \begin{proof} Following the  proof of \cite[Proposition 4.2]{mt-lc}, it suffices to replace \cite[Proposition 6.4]{fgm}
by Proposition \ref{JP ideal singular prop}(i) and  Corollary \ref{corolario ideal singular}(i), and recall that local algebras of nonsingular Jordan pairs inherit nonsingularity by Corollary \ref{corolario ideal singular}(ii).
\end{proof}

Next we prove a pair analogue  of a result contained into the proof of \cite[Theorem 6.5]{pi-i} (see also \cite[Lemma 4.3]{mt-lc}). To prove the following lemma we  make use of a result from \cite{ACMcM_amples} on the $\ast$-tightness of any $\ast$-prime associative system over   its nonzero ample subsystems together to some other intermediate results appearing in the proof of Theorem \ref{relacion pi-ideal hermitiano}.

\begin{lema}\label{amples-tightness JP} Let $V$ be a strongly prime Jordan pair such that $PI(V)=0$ and let the associative pair $A$ be a $\ast$-tight associative envelope of $V$. Then for each $a\in V $, the subpair $S$ of $V$ generated by the ideal $I={\cal H}(V)^{(2\rangle}$, where ${\cal H}(V)\neq0$ for some nonzero hermitian ideal ${\cal H}(X)$ and the element $a$ is strongly prime of hermitian type. Moreover $S=H_0(R,\ast)$, where $R$, the associative subpair of $A$ generated by $I\cup\{a\}$,  is a $\ast$-tight associative envelope of $S$.
\end{lema}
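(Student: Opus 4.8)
The lemma asserts three interlocking facts about the subpair $S$ of $V$ generated by $I = \mathcal{H}(V)^{(2\rangle}$ and an element $a$: that $S$ is strongly prime of hermitian type, that $S = H_0(R,\ast)$, and that $R$ is a $\ast$-tight envelope of $S$. The hypothesis $PI(V)=0$ matters because it forces $V$ to be special of hermitian type (stated in the preliminaries), so a nonzero hermitian ideal $\mathcal{H}(X)$ with $\mathcal{H}(V)\neq 0$ genuinely exists.

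**Proof plan**

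The plan is to reuse, almost verbatim, the machinery already assembled in the proof of Theorem 5.5 (relacion pi-ideal hermitiano), since the objects $I$, $S$, $R$, $B$ are constructed there in exactly the same way. First I would record that $I = \mathcal{H}(V)^{(2\rangle}$ is a nonzero ideal of $V$: it is nonzero because $V$ is strongly prime (hence semiprime), so by the equivalence (i)$\Leftrightarrow$(iv) in \ref{derived ideals} no iterated derived-hull ideal of a nonzero ideal can vanish, and $\mathcal{H}(V)\neq 0$. Next, since $S$ is a Jordan subpair of $V$ containing the nonzero ideal $I$ of the strongly prime pair $V$, the argument of \cite[Lemma 2.4]{acm} (invoked identically in the proof of Theorem \ref{relacion pi-ideal hermitiano}) shows $S$ is itself strongly prime. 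That $S$ is of hermitian type then follows because $I \subseteq S$ is built from the hermitian part $\mathcal{H}(V)$, so $\mathcal{H}(S)\neq 0$ for the same hermitian ideal $\mathcal{H}(X)$.

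For the identification $S = H_0(R,\ast)$, I would appeal directly to Lemma \ref{amples}: that lemma was precisely engineered so that the $n$-tads needed to witness ampleness of the subpair generated by $I$ and a single element $a$ lie back inside the relevant ideal. Thus the same computation that yields $S = H_0(R,\ast)$ and $I = H_0(B,\ast)$ inside the proof of Theorem \ref{relacion pi-ideal hermitiano} applies here unchanged, giving $S = H_0(R,\ast)$ with $R$ the associative subpair of $A$ generated by $I \cup \{a\}$.

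The step I expect to be the genuine obstacle is the $\ast$-tightness of $R$ over $S$. In the proof of Theorem \ref{relacion pi-ideal hermitiano} only the $\ast$-tightness of $B$ over $I$ was extracted (via \cite[Lemma 2.32]{da-hermitian} and the Jordan pair/polarized triple correspondence); here I must upgrade this to $R$ over the larger $S$. The plan is to use the external input flagged in the preamble to the lemma: the result of \cite{ACMcM_amples} stating that a $\ast$-prime associative system is $\ast$-tight over any of its nonzero ample subsystems. Concretely, I would first argue that $R$ is $\ast$-prime: $R$ sits between $B$ and the Martindale symmetric quotients $Q_s(B)$ (as established in the proof of Theorem \ref{relacion pi-ideal hermitiano}), and since $B$ is $\ast$-tight over the strongly prime $I$, $B$ is $\ast$-prime, whence so is $R$ by the symmetric-quotient localization argument \cite[Lemma 2.13(ii)]{gs comm alg32}. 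Having $R$ $\ast$-prime and $S = H_0(R,\ast)$ a nonzero ample subpair, the cited result of \cite{ACMcM_amples} delivers exactly that $R$ is $\ast$-tight over $S$, completing the proof. The only delicate point is checking the hypotheses of that external result transfer correctly through the Jordan-pair/triple-system correspondence, but this is the same bridge already crossed for $B$ over $I$, so it presents no new difficulty.
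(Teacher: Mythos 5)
Your proposal is correct and takes essentially the same route as the paper's proof: the paper likewise imports the construction from the proof of Theorem \ref{relacion pi-ideal hermitiano} (strong primeness of $S$ via \cite[Lemma 2.4]{acm}, the identification $S=H_0(R,\ast)$ via Lemma \ref{amples}, and the $\ast$-primeness of $R$ from the sandwich $B\subseteq R\subseteq Q_s(B)$ with \cite[Lemma 2.13(ii)]{gs comm alg32}), and then concludes $\ast$-tightness of $R$ over $S$ from \cite[Proposition 3.1]{ACMcM_amples}, exactly as you do. Your added remarks (nonvanishing of $I$ from the semiprimeness equivalences in \ref{derived ideals}, and hermitian type of $S$ from $\mathcal{H}(S)\neq 0$) are correct details the paper leaves implicit.
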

\begin{proof} It follows from the proof of Theorem  \ref{relacion pi-ideal hermitiano}   that $S$ is a strongly prime Jordan pair and an ample subpair $S=H_0(R,\ast)$ of $R$, where $R$ denotes the associative subpair of $A$ generated by $I$ and the element $a$. Now, since  $R$ is semiprime (indeed, it is $\ast$-prime), the $\ast$-tightness of $R$ over $S$ follows from \cite[Proposition 3.1]{ACMcM_amples}.
  \end{proof}

\begin{teorema}\label{Sprime PI-less}  Let $V$ be a strongly prime Jordan pair with $PI(V)=0$  and let the associative pair $A$ be a $\ast$-tight associative envelope of $V$. Then  we have $ F(V)=I_l(A)\cap V=I_r(A)\cap V$, where $I_l(A)=(I_l(A^+),I_l(A^-))$ with $I_l(A^\sigma)=\{a\in A^\sigma \mid udim_l(A^{-\sigma}_a)<\infty\} $ and $I_r(A)=(I_r(A^+),I_r(A^-))$ with $I_r(A^\sigma)=\{a\in A^\sigma \mid udim_r(A^{-\sigma}_a)<\infty\} $. Moreover if $A$ is prime, then $udim(V^{-\sigma}_a)=udim_l(A^{-\sigma}_a)=udim_r(A^{-\sigma}_a)$ for all $a\in F(V^\sigma)$, $\sigma=\pm$.
\end{teorema}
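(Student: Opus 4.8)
The plan is to establish the two set equalities elementwise: for a fixed $a\in V^\sigma$ I will prove that the three conditions $udim(V_a^{-\sigma})<\infty$, $udim_l(A_a^{-\sigma})<\infty$ and $udim_r(A_a^{-\sigma})<\infty$ are equivalent, and that the three dimensions coincide once $A$ is prime. The bridge between the Jordan uniform dimension on the one side and the associative left/right uniform dimensions on the other will be ampleness of local algebras, together with a Goldie-dimension comparison, for a semiprime associative algebra $B$ with involution, between the Jordan algebra $H_0(B,\ast)$ and $B$ itself.

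First I would pass to an ample model of the local structure at $a$. Since $V$ is strongly prime it is of hermitian type, so for a suitable hermitian ideal the ideal $I={\cal H}(V)^{(2\rangle}$ is nonzero, hence (by primeness) essential in $V$. Lemma \ref{amples-tightness JP} then produces the subpair $S$ generated by $I$ and $a$, strongly prime of hermitian type, with $\ast$-tight (indeed $\ast$-prime) associative envelope $R$ and $S=H_0(R,\ast)$; by Corollary \ref{corolario-amples} the local algebra $S_a^{-\sigma}=H_0(R_a^{-\sigma},\ast)$ is an ample subspace of the symmetric elements of the semiprime algebra $R_a^{-\sigma}$.

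The next step is to transport uniform dimension across the two reductions $V\rightsquigarrow S$ and $A\rightsquigarrow R$. On the Jordan side, the image of the essential ideal $I$ in the local algebra $V_a^{-\sigma}$ is again essential (by nondegeneracy of $V$) and lies inside $S_a^{-\sigma}\subseteq V_a^{-\sigma}$; sandwiching uniform dimensions gives $udim(V_a^{-\sigma})=udim(S_a^{-\sigma})$. On the associative side, the identities $udim_l(A_a^{-\sigma})=udim_l(R_a^{-\sigma})$ and $udim_r(A_a^{-\sigma})=udim_r(R_a^{-\sigma})$ follow from Proposition \ref{ideal singular pares asociativos} and the local Goldie-dimension results of \cite{goldie pares,gs comm alg32}, using that $R$ and $A$ share the same symmetric Martindale quotients of their standard imbeddings, i.e.\ ${\cal E}_A\subseteq Q_s({\cal E}_A)=Q_s({\cal E}_R)$, exactly as recorded in the proof of Theorem \ref{relacion pi-ideal hermitiano}.

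The core of the argument is then the associative--Jordan Goldie-dimension correspondence for the semiprime algebra with involution $B=R_a^{-\sigma}$ and its ample subspace $H_0(B,\ast)=S_a^{-\sigma}$: finiteness of $udim(H_0(B,\ast))$ is equivalent to finiteness of $udim_l(B)$ and of $udim_r(B)$, and in the prime case the three numbers agree. Composing this with the two transport steps gives $a\in F(V^\sigma)\Leftrightarrow a\in I_l(A^\sigma)\Leftrightarrow a\in I_r(A^\sigma)$, which yields the set equalities; and when $A$ is prime its local algebra $A_a^{-\sigma}$ is prime, so $udim_l(A_a^{-\sigma})=udim_r(A_a^{-\sigma})$, and the ample correspondence identifies this common value with $udim(V_a^{-\sigma})$. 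I expect the main obstacle to be precisely this last correspondence in the presence of an involution, where the left and right uniform dimensions must be matched simultaneously against a single Jordan dimension, together with the bookkeeping that pushes these numerical identities through the reductions $V\rightsquigarrow S$ and $A\rightsquigarrow R$; as in Theorem \ref{relacion pi-ideal hermitiano}, the fact recalled in \ref{derived ideals} that products of ideals of Jordan pairs are merely semi-ideals is what forces the systematic use of the hermitian-ideal identities of Lemma \ref{amples} throughout.
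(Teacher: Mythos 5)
Your proposal is essentially the paper's own proof: the paper settles the theorem by observing the left/right symmetry and transplanting the argument of \cite[Theorem 4.4]{mt-lc} to pairs, with Lemma \ref{amples-tightness JP} supplying the ample subpair $S=H_0(R,\ast)$ and its $\ast$-tight envelope $R$, Corollary \ref{corolario-amples} its localization $S_a^{-\sigma}=H_0(R_a^{-\sigma},\ast)$, the sandwich ${\cal E}_A\subseteq Q_s({\cal E}_A)=Q_s({\cal E}_R)$ together with the results of \cite{goldie pares,gs comm alg32} transporting the associative uniform dimensions, and Theorem \ref{relacion pi-ideal hermitiano} plus \cite[Proposition 1.6]{pi-i} handling the PI bookkeeping --- exactly the reduction-plus-ample-correspondence scheme you describe. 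Two small repairs to your justifications: hermitian type follows from strong primeness \emph{together with} $PI(V)=0$ (not from strong primeness alone), and primeness of $A_a^{-\sigma}$ by itself does not force $udim_l=udim_r$ (a left-but-not-right Ore domain is prime with left uniform dimension $1$ and infinite right uniform dimension); the equality comes instead from the involution that $\ast$ induces on $A_a^{-\sigma}$ because $a^\ast=a$, or directly from the prime case of the ample dimension correspondence you invoke as your core step.
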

\begin{proof}  Let $a\in V^\sigma$. We first note that   it suffices to prove  the left version of the claimed statement
as then the right one will follow symmetrically, that is, it suffices to prove that the local algebra  $V^{-\sigma}_a$ of $V$ at $a$  has finite uniform dimension if and only if so has $A^{-\sigma}_a$, and that both  uniform dimensions coincide if $A$ is prime. It is straightforward to check that the  arguments given in the proof of \cite[Theorem 4.4]{mt-lc} work  here applied now to Jordan pairs. Indeed, it suffices to replace \cite[Lemma 4.3]{mt-lc} by Lemma \ref{amples-tightness JP} and recall that the set $PI(A)$ of PI-elements of an associative pair $A$ is an ideal of $A$  by \cite[Proposition 1.6]{pi-i}. Finally  we recall  the relationship between the set of PI-elements of any strongly prime Jordan pair of hermitian type and those of any of its $\ast$-tight associative envelopes   given in Proposition \ref{relacion pi-ideal hermitiano} and  also that between local algebras  of associative pairs  and those of their standard imbeddings    (see \ref{PI elements AP}).
\end{proof}

\begin{proposicion}\label{relacion ideales singulares} Let $V$ be a strongly prime PI-less Jordan pair and let the associative pair $A$ be a $\ast$-tight  associative envelope of $V$. If $V$ is nonsingular, then  $Z_l(A)\cap V=0$.
\end{proposicion}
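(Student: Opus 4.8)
The plan is to argue by contradiction and to move the whole question to local algebras, where the Jordan and associative singular ideals can be compared. Assume there were a nonzero $a\in Z_l(A^{-\sigma})\cap V^{-\sigma}$. Since $V\subseteq H(A,\ast)$ the element $a$ is symmetric, and as the involution interchanges left and right annihilators while preserving essentiality of one-sided ideals we have $Z_l(A)^\ast=Z_r(A)$; hence $a=a^\ast$ lies in $Z_r(A^{-\sigma})$ too, so $a\in Z_l(A^{-\sigma})\cap Z_r(A^{-\sigma})$. Proposition~\ref{ideal singular pares asociativos}(ii) and its right-handed version then give $Z_l(A_a^\sigma)=Z_r(A_a^\sigma)=A_a^\sigma$, i.e.\ the associative local algebra $A_a^\sigma$ is both left and right singular. (This same symmetry shows $Z_l(A)\cap V=Z_r(A)\cap V$, so the right-hand statement comes for free.)

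I would then bring this back to the Jordan side through an ample tight envelope. Because $V$ need not be ample in $A$, I pass to the subpair $S$ generated by $I={\cal H}(V)^{(2\rangle}$ and $a$: by Lemma~\ref{amples-tightness JP}, $S$ is strongly prime of hermitian type with $S=H_0(R,\ast)$, where the associative subpair $R$ generated by $I\cup\{a\}$ is a $\ast$-tight (hence $\ast$-prime and semiprime) envelope of $S$, and by Corollary~\ref{corolario-amples} the local algebra $S_a^\sigma=H_0(R_a^\sigma,\ast)$ is ample in $R_a^\sigma$. Granting that the full singularity of $A_a^\sigma$ descends to $R_a^\sigma$, the algebra-level correspondence between the Jordan singular ideal of an ample $H_0$ of a semiprime associative algebra and the two one-sided singular ideals of that algebra (the results of \cite{fgm}) shows that $S_a^\sigma$ is a singular Jordan algebra; it is nonzero because $S$ is nondegenerate and $a\neq0$.

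The contradiction is produced by nonsingularity. Since $V$ is nonsingular, Corollary~\ref{corolario ideal singular}(v) applied to the ideal $I\lhd V$ gives $\Theta(I)=0$; and as $I$ is also a nonzero, hence essential, ideal of the strongly prime pair $S$ (every nonzero ideal of a prime pair being essential), a second application of Corollary~\ref{corolario ideal singular}(v) yields that $S$ is nonsingular. By Corollary~\ref{corolario ideal singular}(ii) its local algebra $S_a^\sigma$ is then nonsingular, contradicting the previous paragraph. Therefore no such $a$ exists and $Z_l(A)\cap V=0$.

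The step I expect to be genuinely delicate is the descent of singularity from the ambient $A_a^\sigma$ to $R_a^\sigma$: singularity is relative to the ambient system, and $R$ is merely a subpair of $A$, so $Z_l(A_a^\sigma)=A_a^\sigma$ does not automatically force $Z_l(R_a^\sigma)=R_a^\sigma$. The way through should exploit the chain $B\subseteq R\subseteq A\subseteq Q_s(B)$ of Martindale symmetric quotients produced in the proof of Theorem~\ref{relacion pi-ideal hermitiano}, so that $R$ and $A$ share the same ring of quotients, together with a comparison of one-sided singular ideals across such tight extensions in the spirit of \cite[Lemma~2.13]{gs comm alg32}; one also has to verify the (routine but needed) semiprimeness of the local algebras $A_a^\sigma$ and $R_a^\sigma$ and to pin down the exact form of the Jordan/associative singular dictionary of \cite{fgm} in the ample case.
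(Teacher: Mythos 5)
Your overall strategy is the paper's: reduce to an ample hermitian subpair with a $\ast$-tight associative envelope, localize at $a$, invoke the algebra-level dictionary of \cite{fgm} between the singular ideal $\Theta$ of an ample subalgebra and the one-sided singular ideals of its envelope, and conclude from nonsingularity of the pair. But the step you yourself flag as delicate --- descending $Z_l(A_a^\sigma)=A_a^\sigma$ to $Z_l(R_a^\sigma)=R_a^\sigma$ --- is exactly where the write-up stops being a proof, and the paper shows it is avoidable rather than something to be filled in place: the transfer of singularity is performed at the \emph{pair} level, \emph{before} localizing. Concretely, the paper takes $R$ to be the associative subpair of $A$ generated by ${\cal H}(V)$ alone (no need to adjoin $a$, hence no appeal to Lemma \ref{amples-tightness JP}); then ${\cal H}(V)=H_0(R,\ast)$ and $R$ is $\ast$-tight over ${\cal H}(V)$ by \cite[Lemma 2.32]{da-hermitian}, hence $\ast$-prime, and the sandwich $R\subseteq A\subseteq Q_s(R)$ gives $Z_l(A)\cap R=Z_l(R)$ by \cite[Proposition 2.14(ii)]{gs comm alg32} --- the precise form of the ``comparison across tight extensions'' you gesture at via \cite[Lemma 2.13]{gs comm alg32}. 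After replacing $V$ by ${\cal H}(V)$ (nonsingular by Corollary \ref{corolario ideal singular}(v)) and $A$ by $R$, an element $a\in Z_l(A)\cap V^\sigma$ lies in the left singular ideal of the \emph{ambient} pair, so Proposition \ref{ideal singular pares asociativos}(ii) applies directly there and yields $Z_l(A_a^{-\sigma})=A_a^{-\sigma}$; no comparison between local algebras of two different pairs ever arises. If you wish to keep your architecture, the repair is the same move: apply the $Q_s$-sandwich transfer to get $a\in Z_l(R)$ first, and only then pass to the local algebra of $R$.

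Two smaller points. The opening detour through $Z_r$ (via $Z_l(A)^\ast=Z_r(A)$) is not needed: the claim concerns only $Z_l$, and the left-handed argument suffices. More substantively, to quote the dictionary $Z_l(A_a^{-\sigma})\cap V_a^{-\sigma}=\Theta(V_a^{-\sigma})$ of \cite[Theorem 6.14]{fgm} you need $\ast$-tightness of the local envelope over its ample subspace, not merely ampleness plus semiprimeness as your phrasing suggests; the paper secures this from $\ast$-primeness of $A_a^{-\sigma}$ via \cite[Theorem 1.2]{ACMcM_amples}, a citation your sketch should make explicit. Finally, your endgame by contradiction (two applications of Corollary \ref{corolario ideal singular}(v), then (ii), against singularity of the local algebra) is a legitimate variant, but the paper concludes positively and more economically: $\Theta(V_a^{-\sigma})=V_a^{-\sigma}$ forces $Q_aV^{-\sigma}\subseteq\Theta(V)=0$ by Proposition \ref{JP ideal singular prop}(ii), whence $a=0$ by nondegeneracy, with no need to verify that the local algebra is nonzero.
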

\begin{proof}  We first note that $V$ is special of hermitian type since it is PI-less. Take a hermitian ideal ${\cal H}(X)$ with ${\cal H}(V)\neq0$, and let $R$ be the associative subpair of $A$ generated by ${\cal H}(V)$. Then ${\cal H}(V)=H_0(R,\ast)$ is an ample subpair of $H(R,\ast)$ and $R$ is $\ast$-tight over ${\cal H}(V)$ \cite[Lemma 2.32]{da-hermitian}. Therefore $R$ is $\ast$-prime. Moreover since $R\subseteq A \subseteq Q_s(R)$, where $Q_s(R)$ denotes the Martindale pair of symmetric quotients of $R$ we have $Z_l(A)\cap R=Z_l(R)$ \cite[Proposition 2.14(ii)]{gs comm alg32}. Thus since by  Corollary \ref{corolario ideal singular}(v) the ideal ${\cal H}(V)$ is nonsingular, we may replace $V$ by ${\cal H}(V)$  and assume that $V=H_0(A,\ast)$ is an ample subpair of $A$.

Take now $a\in Z_l(A)\cap V^\sigma$. By Corollary \ref{corolario-amples} we have $V^{-\sigma}_a=H_0(A^{-\sigma}_a,\ast)$ and, since $A^{-\sigma}_a$ is $\ast$-prime, it follows from  \cite[Theorem~1.2]{ACMcM_amples}
that $A^{-\sigma}_a$ is $\ast$-tight over $V^{-\sigma}_a$. Therefore, by \cite[Theorem 6.14]{fgm}, $Z_l(A^{-\sigma}_a)\cap V^{-\sigma}_a=\Theta(V^{-\sigma}_a)$, where $Z_l(A^{-\sigma}_a)= A^{-\sigma}_a $ by Proposition \ref{ideal singular pares asociativos}(ii). Hence we have $\Theta(V^{-\sigma}_a)=V^{-\sigma}_a$ which implies  $Q_aV^{-\sigma}\subseteq \Theta(V)=0$ by Proposition \ref{JP ideal singular prop}(ii), and we finally obtain $a=0$ by nondegeneracy of $V$.
\end{proof}

\begin{teorema}\label{th LC ideal SP-JP} Let $V$ be a strongly prime Jordan pair. Then $LC(V)$ is an ideal of $V$.
\end{teorema}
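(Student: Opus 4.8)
The plan is to split into two cases according to whether $V$ has nonzero PI-elements, since the preceding results in this section have been organized precisely to handle these two cases separately. If $PI(V)\neq 0$, then Proposition \ref{LC-if-PIelements} immediately gives $LC(V)=PI(V)$, which is an ideal of $V$ by \cite[Theorem 5.4]{pi-i}, and there is nothing more to prove. So the entire content of the theorem lies in the PI-less case, and that is where I would concentrate the argument.

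Assume now $PI(V)=0$. If $LC(V)=0$ there is nothing to prove, so assume $LC(V)\neq 0$. By Proposition \ref{lc-fv}, $V$ is nonsingular and $LC(V)=F(V)$, where $F(V^\sigma)=\{x\in V^\sigma\mid udim(V_x^{-\sigma})<\infty\}$. Thus the problem reduces to showing that $F(V)$ is an ideal of $V$. The strategy is to transfer this question to the $\ast$-tight associative envelope $A$ of $V$ (which exists because PI-less strongly prime Jordan pairs are special of hermitian type, as recorded at the end of \ref{socle}ff.). By Theorem \ref{Sprime PI-less} we have $F(V)=I_l(A)\cap V=I_r(A)\cap V$, so $F(V)=\big(I_l(A)\cap I_r(A)\big)\cap V$. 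The plan is then to invoke Proposition \ref{assoc pairs ideal I}, which asserts that $I_l(A)\cap I_r(A)$ is an ideal of the semiprime associative pair $A$, provided $Z_l(A)\cap Z_r(A)=0$.

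Verifying the hypothesis $Z_l(A)\cap Z_r(A)=0$ of Proposition \ref{assoc pairs ideal I} is, I expect, the main obstacle, and this is exactly what Proposition \ref{relacion ideales singulares} has been prepared to supply: since $V$ is nonsingular (established above) and $A$ is a $\ast$-tight envelope of $V$, that proposition gives $Z_l(A)\cap V=0$, and symmetrically $Z_r(A)\cap V=0$. Because $A$ is $\ast$-tight over $V$, any nonzero $\ast$-ideal of $A$ meets $V$ nontrivially; so I would argue that the ideal $Z_l(A)\cap Z_r(A)$, being a $\ast$-stable ideal of $A$ that intersects $V$ trivially (it is contained in $Z_l(A)\cap V=0$ after meeting $V$), must be zero. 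The delicate point here is ensuring $Z_l(A)\cap Z_r(A)$ is genuinely a $\ast$-ideal so that tightness applies; this follows from the fact that the involution interchanges left and right annihilators and hence swaps $Z_l(A)$ and $Z_r(A)$, making their intersection $\ast$-invariant.

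Once $I:=I_l(A)\cap I_r(A)$ is known to be an ideal of $A$, it remains to deduce that $F(V)=I\cap V$ is an ideal of $V$. Here I would use that $A\subseteq H(A,\ast)$ with $V$ an ample subpair and that the intersection of an associative $\ast$-ideal with an ample subpair is an ideal of that Jordan subpair; more concretely, $I$ being a $\ast$-ideal of $A$ (again by the involution swapping $I_l$ and $I_r$), its Hermitian trace $I\cap V$ inherits the Jordan ideal structure from the inclusions $Q_{V}(I\cap V)$, $Q_{(I\cap V)}V$, and $\{I\cap V,V,V\}$ all landing inside $I\cap V$ by direct computation in $A^{(+)}$. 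This final descent from the associative ideal to the Jordan ideal follows the pattern of \cite{mt-lc}, and I would expect it to be routine once tightness and semiprimeness are in hand. Combining both cases, $LC(V)$ is an ideal of $V$.
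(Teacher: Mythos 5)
Your proposal is correct and follows essentially the same route as the paper's own proof: reduce to the PI-less case via Proposition \ref{LC-if-PIelements}, use Proposition \ref{lc-fv} to get nonsingularity and $LC(V)=F(V)$, kill $Z_l(A)\cap Z_r(A)$ by combining Proposition \ref{relacion ideales singulares} with $\ast$-tightness (the involution swapping $Z_l(A)$ and $Z_r(A)$, exactly as the paper writes $Z_l(A)\cap Z_r(A)=Z_l(A)\cap Z_l(A)^\ast$), and then apply Proposition \ref{assoc pairs ideal I} and Theorem \ref{Sprime PI-less} to conclude. Your closing observation that intersecting the $\ast$-ideal $I_l(A)\cap I_r(A)$ with the subpair $V$ yields a Jordan ideal is the (routine) step the paper leaves implicit, so nothing is missing.
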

\begin{proof} Obviously we can assume  that  $LC(V)\neq0$. If $PI(V)\neq0$ it was already  proved in Proposition \ref{LC-if-PIelements}  that $LC(V)$ is an ideal of $V$, we can therefore suppose that $V$ is PI-less. Let $A$ be a $\ast$-tight associative envelope of $V$,   we have that $V$ is nonsingular, and that $LC(V)=F(V)$ by Proposition  \ref{lc-fv}.
 Moreover   $(Z_l(A)\cap Z_r(A))\cap V=0$ by Proposition \ref{relacion ideales singulares}. Thus, since $Z_l(A)\cap Z_r(A)=Z_l(A)\cap Z_l(A)^\ast$ is a  $\ast$-ideal of $A$, we get $Z_l(A)\cap Z_r(A)=0$ by $\ast$-tightness of $A$ over $V$. Now, by Proposition \ref{assoc pairs ideal I},  $I_l(A)\cap I_r(A)=I_l(A)\cap I_l(A)^\ast$  is a  $\ast$-ideal of $A$  contained  in $ ann_A(Z_l(A)+Z_r(A))$. (The proof that
$I_l(A)\cap I_r(A)=I_l(A)\cap I_l(A)^\ast$  is an $\ast$-ideal of $A$ works as in   \cite[Proposition 4.6]{mt-lc}.)
    Hence $LC(V)$ is an ideal of $V$ by  Theorem \ref{Sprime PI-less}.
\end{proof}

Finally, to extend the above result to nondegenerate Jordan pairs it suffices to restate adapted to pairs the results of \cite[Section 5]{mt-lc} to prove that the set of LC-elements of a nondegenerate Jordan algebra is an ideal. We anyhow recall here some of the most relevant ideas needed to prove the required results.

\begin{apartado}
Let $V$ be a nondegenerate Jordan pair. An ideal $I$ of $V$ is said to be {\sl semi-uniform}
if there exist prime ideals $P_1,\ldots,P_n$ of $V$ such that $P_1\cap \cdots \cap  P_n\subseteq ann_V(I)$.     The Jordan pair $V$ is said to be {\sl semi-uniform} if it is a semi-uniform ideal.  For any semi-uniform ideal $I$ of a nondegenerate Jordan pair $V$ there exists a  minimal set of prime ideals ${\cal P}=\{P_1,\ldots,P_n\}$ of $V$ such that  $P_1\cap \cdots P_n\subseteq ann_V(I)$, in fact it holds that $P_1\cap \cdots \cap  P_n=ann_V(I)$. This set of prime ideals, named {\sl the minimal set of prime ideals associated to $I$} is unique and can be  shown to be equal to ${\cal P}=\{P\vartriangleleft V\mid P \ \hbox{minimal prime and}\   ann_V(I)\subseteq P \}$ \cite[Lemma 5.3]{mt-lc}.
\end{apartado}

\begin{apartado} For any nondegenerate Jordan pair $V$ being semi-uniform is equivalent to being an essential subdirect sum of finitely many strongly prime Jordan pairs. Recall that (see  \cite[p. 448]{fgm} or \cite{fg-lattices}) an {\sl essential subdirect product} is a subdirect product which contains an essential ideal of the full direct product. If $V$ is contained in the direct sum   $\bigoplus V_\alpha$, then $V$ is called an {\sl essential subdirect sum}.
\end{apartado}

\begin{proposicion}
 Let $V$ be a nondegenerate Jordan pair. Then the following conditions are equivalent:
 \begin{enumerate}
 \item[(i)] $V$ is semi-uniform.
 \item[(ii)]  $V$ has no infinite direct sums of ideals.
  \item[(iii)] $V$ has the acc on annihilator ideals.
   \item[(iv)] $V$ is an essential subdirect sum of finitely many strongly prime Jordan pairs.
 \end{enumerate}
 \end{proposicion}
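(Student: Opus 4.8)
The plan is to establish the cyclic chain $(i)\Rightarrow(iv)\Rightarrow(ii)\Rightarrow(iii)\Rightarrow(i)$, following the pattern of the Jordan algebra results in \cite[Section 5]{mt-lc} together with the lattice-theoretic arguments of \cite{fg-lattices}, and making the adjustments forced by the pair setting. The equivalence $(i)\Leftrightarrow(iv)$ is, in fact, recorded in the apartado immediately preceding the statement, so the real work is to insert $(ii)$ and $(iii)$ into the list. Throughout I would exploit that $V$ is nondegenerate, hence semiprime, so that the annihilator description of \ref{annihilators} is available: for annihilator ideals one has $I=ann_V(ann_V(I))$, for any ideal $I\cap ann_V(I)=0$, and an ideal is essential exactly when its annihilator is $0$. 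For $(i)\Rightarrow(iv)$ I would use the pair analogue of \cite[Lemma 5.3]{mt-lc}: semi-uniformity yields a finite minimal set of prime ideals $P_1,\dots,P_n$ with $P_1\cap\cdots\cap P_n=ann_V(V)=0$, whence the natural map $V\to\prod_i V/P_i$ has zero kernel and realizes $V$ as a subdirect sum; the components (taken as the ideals $ann_V(P_i)$, equivalently $\bigcap_{j\neq i}P_j$) are strongly prime thanks to nondegeneracy, and their sum is essential since its annihilator is $\bigcap_i P_i=0$.

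For $(iv)\Rightarrow(ii)$ the key observation is that a strongly prime Jordan pair is \emph{uniform} for ideals: if $I,L$ are nonzero ideals, primeness gives $Q_IL\neq0$, and since $Q_{I^\sigma}L^{-\sigma}\subseteq I^\sigma\cap L^\sigma$ we get $I\cap L\neq0$, so no strongly prime pair contains a direct sum of two nonzero ideals. If $V$ is an essential subdirect sum of strongly prime ideals $C_1,\dots,C_n$ and $\bigoplus_{j=1}^m I_j$ is a direct sum of nonzero ideals of $V$, then each $I_j$ meets the essential ideal $\bigoplus_i C_i$, while for each fixed $i$ at most one of the ideals $I_j\cap C_i$ of the uniform pair $C_i$ is nonzero; counting then gives $m\le n$, so $V$ has no infinite direct sums of ideals.

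The equivalence $(ii)\Leftrightarrow(iii)$ is the Goldie-type correspondence between finite uniform dimension for ideals and the ascending chain condition on annihilator ideals, and I would argue both directions by contraposition. For $(iii)\Rightarrow(ii)$, an infinite direct sum $\bigoplus_k I_k$ produces the chain $ann_V\big(\bigoplus_{k>n}I_k\big)$, $n\ge 1$, which ascends because the tails descend and is strict because $I_{n+1}$ lies in the $(n{+}1)$-st term but, by $I_{n+1}\cap ann_V(I_{n+1})=0$, not in the $n$-th. For $(ii)\Rightarrow(iii)$, from a strictly ascending chain of annihilator ideals $J_1\subsetneq J_2\subsetneq\cdots$ I would form $D_k=J_{k+1}\cap ann_V(J_k)$; each $D_k\neq0$, since $J_{k+1}\cap ann_V(J_k)=0$ would force $Q_{J_{k+1}}ann_V(J_k)=0$ and hence $J_{k+1}\subseteq ann_V(ann_V(J_k))=J_k$, and the $D_k$ are pairwise orthogonal because for $m>k$ we have $D_m\subseteq ann_V(J_m)\subseteq ann_V(J_{k+1})$, which annihilates $D_k\subseteq J_{k+1}$. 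This yields an infinite direct sum, and both constructions rely only on the nondegeneracy facts of \ref{annihilators}.

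The implication $(iii)\Rightarrow(i)$ is where I expect the main difficulty. Here I would use the ACC to control the lattice of annihilator ideals of $V$: transcribing \cite{fg-lattices} to pairs, this lattice becomes a finite complemented lattice whose atoms are the annihilators $ann_V(P_i)$ of the finitely many minimal prime ideals $P_1,\dots,P_n$ of $V$; infiniteness of the set of minimal primes is excluded because, as in $(ii)\Rightarrow(iii)$, it would build an infinite strictly ascending chain of annihilators. Since $V$ is nondegenerate (hence semiprime) the intersection of all its minimal primes is $0$, so $P_1\cap\cdots\cap P_n=0$, which is exactly semi-uniformity. The delicate points, which I regard as the crux, are proving that maximal proper annihilator ideals are prime and that nondegeneracy makes the atom ideals (equivalently the minimal-prime quotients) strongly prime; both would be handled by adapting the arguments of \cite[Section 5]{mt-lc} and \cite{fg-lattices} to the pair setting, always reducing products of ideals to the controllable annihilator conditions of \ref{annihilators}.
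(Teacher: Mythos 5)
Your proposal is correct and takes essentially the same route as the paper, whose entire proof is the observation that the argument of \cite[Proposition 5.6]{mt-lc} transfers to pairs: your cyclic chain $(i)\Rightarrow(iv)\Rightarrow(ii)\Rightarrow(iii)\Rightarrow(i)$, the annihilator calculus of \ref{annihilators} (in particular $I\cap ann_V(I)=0$ and $I=ann_V(ann_V(I))$ for annihilator ideals), the orthogonal ideals $D_k=J_{k+1}\cap ann_V(J_k)$, and the maximal-annihilator-ideals-are-prime step are exactly the ingredients of that proof. The one point to tighten is in $(i)\Rightarrow(iv)$: the subdirect factors must be the quotients $V/P_i$ rather than the internal ideals $C_i=ann_V(P_i)$ (there is in general no surjection $V\to C_i$, so the $C_i$ cannot serve as components of a subdirect representation); each $V/P_i$ is prime and contains the image $\overline{C_i}\cong C_i$ as a nonzero nondegenerate ideal, so ideal-inheritance of the McCrimmon radical \cite{mc-prime-inh} together with primeness forces $V/P_i$ to be strongly prime, and then $\bigoplus_i\overline{C_i}$, being a sum of essential ideals of the prime factors, is the essential ideal of the full product required by the definition of essential subdirect sum.
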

 \begin{proof} The proof of \cite[Proposition  5.6]{mt-lc} works here. \end{proof}

 Local algebras of essential subdirect sums of nondegenerate Jordan pairs are subdirect sums of nondegenerate Jordan algebras which are obtained as local algebras of the essential subdirect factors.  Moreover local algebras of nondegenerate Jordan pairs inherit the property of being semi-uniform.

\begin{lema}
 Let $V$ be a nondegenerate Jordan pair and let $a\in V^{-\sigma}$. If $V$ is an  essential subdirect sum of Jordan pairs $V_1,\ldots, V_n$ and $a=a_1+\cdots+a_n$, with $a_i\in V_i^{-\sigma}$, then the local algebra $V^\sigma_a$ of $V$ at $a$ is an
 essential subdirect sum of the local lgebras $(V_1^\sigma)_{a_1},\ldots,(V_n^\sigma)_{a_n} $.
  \end{lema}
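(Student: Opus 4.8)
The plan is to realize $V^\sigma_a$, via the coordinate projections, as a subalgebra of $\bigoplus_i (V_i^\sigma)_{a_i}$, and then to locate inside its image an essential ideal manufactured from the essential ideal that witnesses the subdirect decomposition of $V$. Throughout I use that the $V_i$ are nondegenerate (they are the essential subdirect factors) and that local algebras of nondegenerate Jordan pairs are again nondegenerate. First I would set up the homomorphism: each projection $\pi_i\colon V\to V_i$ is a surjective homomorphism of Jordan pairs with $\pi_i(a)=a_i$ and $\pi_i(x)=x_i$ for $x=x_1+\cdots+x_n$. Applying $\pi_i$ to the defining relations of $Ker_V(a)$ shows $\pi_i(Ker_V(a))\subseteq Ker_{V_i}(a_i)$, so functoriality of the homotope and local-algebra constructions yields a homomorphism $(\pi_i)_a\colon V^\sigma_a\to (V_i^\sigma)_{a_i}$, surjective because $\pi_i$ is surjective on $V^\sigma$. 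Assembling the components gives $\phi=((\pi_1)_a,\ldots,(\pi_n)_a)\colon V^\sigma_a\to\bigoplus_i(V_i^\sigma)_{a_i}$ with onto component maps, hence a subdirect map.

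Next I would verify that $\phi$ is injective, which is where nondegeneracy of $V$ enters. If $\bar x\in\ker\phi$ then $x_i\in Ker_{V_i}(a_i)$, and in particular $Q_{a_i}x_i=0$, for each $i$. Since the operations of $\bigoplus_i V_i$ are computed componentwise and $V$ is a subpair, $Q_ax=\sum_i Q_{a_i}x_i=0$; nondegeneracy of $V$ gives $Ker_V(a)=\{y\mid Q_ay=0\}$ (see \ref{annihilators}), so $x\in Ker_V(a)$ and $\bar x=0$. Thus $\phi$ embeds $V^\sigma_a$ as a subdirect sum of the $(V_i^\sigma)_{a_i}$.

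The essentiality is the core of the argument and comes from the essential ideal. By definition $V$ contains an essential ideal $W$ of $\bigoplus_i V_i$; putting $W_i=W\cap V_i$, where $V_i$ denotes the $i$-th summand viewed as an ideal of $\bigoplus_j V_j$, each $W_i$ is a nonzero ideal of $V_i$ which is moreover essential in $V_i$, since any nonzero ideal $K$ of $V_i$ is an ideal of $\bigoplus_j V_j$ and $0\neq W\cap K\subseteq W_i\cap K$. As $\bigoplus_i W_i\subseteq W\subseteq V$, the image of $(\bigoplus_i W_i)^\sigma$ under $\phi$ (composed with the quotient map) is $\bigoplus_i\overline{W_i^\sigma}$, where $\overline{W_i^\sigma}$ is the image of $W_i^\sigma$ in $(V_i^\sigma)_{a_i}$, and this lies inside $\phi(V^\sigma_a)$. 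The key local claim, which I expect to be the main obstacle, is that $\overline{W_i^\sigma}$ is an \emph{essential} ideal of the nondegenerate local algebra $(V_i^\sigma)_{a_i}$. I would prove this by the annihilator criterion (an ideal of a nondegenerate Jordan algebra is essential iff its annihilator vanishes): given $\bar z$ in that annihilator, $U^{(a_i)}_{\bar z}\,\overline{W_i^\sigma}=0$ means $Q_{a_i}Q_zQ_{a_i}y=0$ for all $y\in W_i^\sigma$, and the fundamental formula $Q_{a_i}Q_zQ_{a_i}=Q_{Q_{a_i}z}$ rewrites this as $Q_{Q_{a_i}z}W_i^\sigma=0$; hence $Q_{a_i}z\in ann_{V_i}(W_i^\sigma)=0$ by essentiality of $W_i$ and the characterization in \ref{annihilators}, so $z\in Ker_{V_i}(a_i)$ and $\bar z=0$.

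Finally, since annihilators in a finite direct sum are computed componentwise, each $\overline{W_i^\sigma}$ having zero annihilator forces $\bigoplus_i\overline{W_i^\sigma}$ to have zero annihilator in $\bigoplus_i(V_i^\sigma)_{a_i}$, so it is essential there. As it is contained in $\phi(V^\sigma_a)$, we conclude that $V^\sigma_a$ is an essential subdirect sum of the local algebras $(V_1^\sigma)_{a_1},\ldots,(V_n^\sigma)_{a_n}$, as required. Everything except the local essentiality claim is functoriality of local algebras together with componentwise computations in the direct sum.
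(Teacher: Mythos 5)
Your proof is correct and follows essentially the same route as the paper, which for this lemma simply invokes the Jordan-algebra argument of \cite[Lemma 5.9]{mt-lc}: the componentwise projections induce surjections onto the local algebras $(V_i^\sigma)_{a_i}$, injectivity of the assembled map comes from $Ker_V(a)=\{x\in V^\sigma\mid Q_ax=0\}$ in the nondegenerate case, and essentiality is transported through the essential ideal $W$ of $\bigoplus_i V_i$ via the annihilator criterion together with the fundamental formula $Q_{a_i}Q_zQ_{a_i}=Q_{Q_{a_i}z}$. The one point you should make explicit is your repeated use of nondegeneracy of the factors $V_i$ (for $Ker\,a_i=\{x\mid Q_{a_i}x=0\}$, for $ann_{V_i}(W_i^\sigma)=\{b\mid Q_bW_i^\sigma=0\}$, and for ``zero annihilator implies essential'' in the local algebras $(V_i^\sigma)_{a_i}$), which is not literally among the stated hypotheses and is justified in your write-up only by a parenthetical remark, but which does hold in the paper's intended setting, since the lemma is applied exactly to the strongly prime factors arising from the semi-uniform decomposition.
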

 \begin{proof} The proof of \cite[Lemma 5.9]{mt-lc} works here. \end{proof}

  \begin{proposicion}
 Let $V$ be a nondegenerate Jordan pair and let $a\in V^{-\sigma}$.
 \begin{enumerate}
 \item[(i)] If $V$ is semi-uniform, then $V^\sigma_a$ is semi-uniform.
 \item[(ii)]  $V^\sigma_a$ is semi-uniform if and only if the ideal $id_V(a)$ generated by $a$ is semi-uniform.
 \end{enumerate}
 \end{proposicion}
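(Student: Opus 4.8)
The plan is to reduce both parts to the ideal $I=id_V(a)$ by comparing $V^\sigma_a$ with the local algebra of $I$ at $a$. Writing $K=\overline{I^\sigma}=(I^\sigma+Ker\,a)/Ker\,a\subseteq V^\sigma_a$, the first step is to show that $K$ is an \emph{essential} ideal of $V^\sigma_a$ and that $K\cong I^\sigma_a$. The isomorphism is clear, since $Ker_I(a)=I^\sigma\cap Ker_V(a)$ and the homotope products agree; that $K$ is an ideal follows from $Q_aV^\sigma\subseteq I^{-\sigma}$ and $Q_{V^\sigma}I^{-\sigma}\subseteq I^\sigma$. For essentiality I would compute the annihilator of $K$ in the (nondegenerate) algebra $V^\sigma_a$: if $U^{(a)}_{\bar z}K=0$ then, by the fundamental formula $Q_{Q_az}=Q_aQ_zQ_a$, one gets $Q_{Q_az}I^\sigma=Q_aQ_zQ_aI^\sigma=0$, so $Q_az\in ann_V(I^\sigma)$; since also $Q_az\in I^{-\sigma}$ and $I\cap ann_V(I)=0$ by semiprimeness, $Q_az=0$, i.e. $\bar z=0$. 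Thus $K$ is essential. One must \emph{not} expect $V^\sigma_a\cong I^\sigma_a$: the inclusion $Q_aI^\sigma\subseteq Q_aV^\sigma$ is in general proper, so only the essential-ideal comparison is available. Combining this with the semi-uniform analogue, for essential ideals, of Corollary \ref{corolario ideal singular}(v) — a nondegenerate Jordan algebra is semi-uniform if and only if its essential ideals are — yields the equivalence
$$V^\sigma_a \text{ is semi-uniform} \iff I^\sigma_a \text{ is semi-uniform}.$$

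For (i), I would use the characterization in the preceding Proposition to write a semi-uniform $V$ as an essential subdirect sum of finitely many strongly prime Jordan pairs $V_1,\dots,V_n$, and then apply the preceding Lemma to decompose $V^\sigma_a$ as an essential subdirect sum of the local algebras $(V_i^\sigma)_{a_i}$. The key input is that the local algebra of a strongly prime Jordan pair is again strongly prime, hence in particular semi-uniform; granting this, an essential subdirect sum of finitely many semi-uniform Jordan algebras is semi-uniform — equivalently, it is again an essential subdirect sum of finitely many strongly prime algebras — so $V^\sigma_a$ is semi-uniform.

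Part (ii) then follows from the displayed equivalence. For $(\Leftarrow)$, if $I=id_V(a)$ is semi-uniform I would apply (i) to the nondegenerate Jordan pair $I$ with the element $a\in I^{-\sigma}$ to conclude that $I^\sigma_a$, and hence $V^\sigma_a$, is semi-uniform. For $(\Rightarrow)$ I would argue contrapositively: if $I$ is not semi-uniform it carries an infinite direct sum of nonzero ideals $L_1,L_2,\dots$; each image $\overline{L_k^\sigma}\subseteq I^\sigma_a$ is an ideal (using $Q_aL_k^\sigma\subseteq L_k^{-\sigma}$), the images remain in direct sum (if $Q_al_k=\sum_{j\neq k}Q_al_j$, directness of the $L_j^{-\sigma}$ forces $Q_al_k=0$), and each is nonzero because $a$ generates $I$ and therefore annihilates no nonzero ideal of $I$ — otherwise $ann_I(L_k)$ would be an ideal containing $a$, hence all of $I$, forcing $L_k^{\langle1\rangle}=0$ and $L_k=0$ by semiprimeness. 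This produces an infinite direct sum of ideals of $I^\sigma_a$, so $I^\sigma_a$, and hence $V^\sigma_a$, is not semi-uniform.

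The hard part will be the structural input for (i): that local algebras of strongly prime Jordan pairs remain strongly prime (equivalently, semi-uniform). This is where the pair setting demands more than bookkeeping, and I expect it to rest on the inheritance of nondegeneracy and primeness by local algebras together with the structure theory of strongly prime Jordan systems, paralleling the algebra treatment in \cite{mt-lc}. A secondary technical point is the claim that $a$ annihilates no nonzero ideal of $id_V(a)$ (equivalently, that $a$ already generates $id_V(a)$ from within it); the decomposable case is immediate from semiprimeness, but the general statement needs the identities recalled in \ref{derived ideals} to control the passage between the $\sigma$- and $-\sigma$-components, since products of ideals are only semi-ideals.
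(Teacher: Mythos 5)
Your route is essentially the paper's own: the paper proves this proposition simply by invoking the proof of \cite[Proposition 5.10]{mt-lc}, and your outline reproduces that Section 5 machinery in the pair setting. Part (i) is exactly as intended: decompose a semi-uniform $V$ as an essential subdirect sum of finitely many strongly prime pairs (the preceding proposition), decompose $V^\sigma_a$ accordingly (the preceding lemma), and use that local algebras of strongly prime Jordan systems are strongly prime --- which you should not expect to have to prove, since it is the known local-inheritance theorem (\cite{acm} and its versions for Jordan systems), not a new difficulty of the pair case. Your treatment of $K=(I^\sigma+Ker\,a)/Ker\,a\cong I^\sigma_a$ as an \emph{essential} ideal of $V^\sigma_a$ is also correct (the computation $Q_{Q_az}I^\sigma=Q_aQ_zQ_aI^\sigma$ and $Q_az\in I^{-\sigma}\cap ann_V(I^\sigma)=0$ is the right one), as is your caution that $V^\sigma_a\not\cong I^\sigma_a$ in general.

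The one genuine gap is the nonvanishing step in the $(\Rightarrow)$ direction of (ii). From $Q_aL_k^\sigma=0$ you place $a$ in $ann_I(L_k)$ and conclude that this ideal is all of $I$ ``because $a$ generates $I$''. That conflates $id_V(a)$ with $id_I(a)$: $ann_I(L_k)$ is an ideal of $I$, not of $V$, so $a\in ann_I(L_k)$ only gives $id_I(a)\subseteq ann_I(L_k)$, and $id_I(a)$ may be a proper ideal of $I=id_V(a)$; nondegeneracy yields only a derived-ideal sandwich of the type $I^{\langle2\rangle}\subseteq id_I(a)$ (see \ref{derived ideals}). Your closing caveat senses trouble here but locates it in the $\sigma$/$-\sigma$ bookkeeping, whereas the real issue is ideals of ideals versus ideals. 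The standard repair is short: test semi-uniformity of $I$ with nonzero ideals of $V$ contained in $I$ --- legitimate because every nonzero ideal of $I$ contains a nonzero ideal of $V$ (a suitable derived hull of the ideal of $V$ it generates, nonzero by semiprimeness; \ref{derived ideals}, \cite{mc-prime-inh}), and passing to these smaller ideals preserves directness. Then $Q_aL_k^\sigma=0$ gives, by \ref{annihilators}, $a\in ann_V(L_k)$, which \emph{is} an ideal of $V$, so $I=id_V(a)\subseteq ann_V(L_k)$; since $L_k\subseteq I$ this forces $Q_{L_k}L_k=0$, hence $L_k=0$ by semiprimeness. (Alternatively, keep $L_k\vartriangleleft I$ and use $I^{\langle2\rangle}\subseteq ann_I(L_k)$ together with the fact that an ideal and its derived ideals have the same annihilator in a semiprime pair, whence $L_k\subseteq ann_I(I)=0$.) A last, minor point: both your essential-ideal transfer lemma and the application of (i) to the pair $I$ use the bridge between ``semi-uniform as an ideal of $V$'' (the paper's definition, via prime ideals of $V$ intersecting inside $ann_V(I)$) and ``semi-uniform as a pair in its own right''; that bridge belongs to the same toolkit of \cite[Section 5]{mt-lc} (minimal primes over $ann_V(I)$) and should be cited rather than taken silently.
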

  \begin{proof} The proof of \cite[Proposition  5.10]{mt-lc} works here. \end{proof}

\begin{apartado}  Let $V$ be a nondegenerate Jordan pair $V$. Then  the set    $SU(V)=(SU(V^+),SU(V^-))$ where
$SU(V^\sigma)=\{a\in V^\sigma\mid V^{-\sigma}_a\ \hbox{is semi-uniform}\}$ is an ideal of $V$ (this is proved as \cite[Proposition  5.11]{mt-lc}). Note that  for any nonzero LC-element $a\in LC(V^{-\sigma})$ of a  nondegenerate Jordan pair $V$  that $a\in SU(V^{-\sigma})$ by \cite[Theorem 10.2(ii), 10.3]{fgm} and \cite[Proposition 5.6]{mt-lc} applied to the local algebra $V^\sigma_a$ of $V$ at $a$, hence  $LC(V)\subseteq SU(V)$.
\end{apartado}

We finally obtain the desired result on the set of LC-elements of nondegenerate Jordan pairs.

\begin{teorema}  Let $V$ be a nondegenerate Jordan pair. Then the set $LC(V)=(LC(V^+),LC(V^-))$ where
$LC(V^\sigma)=\{a\in V^\sigma\mid V^{-\sigma}_a\ \hbox{is an LC-algebra}\}$ is an ideal of $V$.
\end{teorema}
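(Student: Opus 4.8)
The plan is to deduce the nondegenerate case from the strongly prime case already settled in Theorem \ref{th LC ideal SP-JP}, using as a bridge the semi-uniform structure of local algebras at LC-elements. The decisive preparatory fact, recorded in the paragraph just above the statement, is the inclusion $LC(V)\subseteq SU(V)$: every nonzero LC-element $a\in LC(V^\sigma)$ is semi-uniform, so its local algebra $V^{-\sigma}_a$ is semi-uniform and, by the preceding Proposition on the transfer of semi-uniformity between a local algebra and the ideal it generates, $id_V(a)$ is semi-uniform as well. The characterization of semi-uniform nondegenerate Jordan pairs then presents $id_V(a)$ as an essential subdirect sum of finitely many strongly prime Jordan pairs $V_1,\dots,V_n$.

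With this decomposition in hand, I would write $a=a_1+\cdots+a_n$ with $a_i\in V_i^\sigma$ and invoke the Lemma on local algebras of essential subdirect sums (applied with the roles of $\sigma$ and $-\sigma$ interchanged) to identify $V^{-\sigma}_a$ with an essential subdirect sum of the local algebras $(V_i^{-\sigma})_{a_i}$. The goal of this step is a componentwise criterion for LC-ness: that $a\in LC(V)$ if and only if $a_i\in LC(V_i)$ for every $i$. To obtain it I would run the essentiality-of-inner-ideals characterization of Theorem \ref{th-FGM goldie LC JA} through the essential subdirect sum, showing that an inner ideal of the sum is essential and contains an injective element exactly when its projection to each strongly prime factor does, so that the whole local algebra is an LC-algebra precisely when each strongly prime summand is.

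Finally, Theorem \ref{th LC ideal SP-JP} shows that $LC(V_i)$ is an ideal of each strongly prime $V_i$. Transporting the corresponding $Q$- and $D$-absorption relations across the subdirect decomposition (and using that $0$ is always an LC-element, so that vanishing components cause no trouble), I would conclude that $LC(V^\sigma)$ absorbs the products $Q_{V^\sigma}LC(V^{-\sigma})$, $Q_{LC(V^\sigma)}V^{-\sigma}$ and $\{LC(V^\sigma),V^{-\sigma},V^\sigma\}$, and hence that $LC(V)$ is an ideal of $V$, along the lines of the algebra argument of \cite[Theorem 5.13]{mt-lc}.

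The step I expect to be the main obstacle is the componentwise criterion of the second paragraph: one must verify that the defining property of LC-algebras in Theorem \ref{th-FGM goldie LC JA} is simultaneously inherited by and reflected from the strongly prime factors of an essential subdirect sum, which requires controlling both the essentiality of inner ideals and the presence of injective elements under the canonical projections. Once this compatibility is secured, the passage from the strongly prime result to the nondegenerate statement is purely formal.
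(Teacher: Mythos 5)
Your overall architecture coincides with the paper's: its proof of this theorem is precisely the reduction of \cite[Theorem 5.13]{mt-lc} --- the inclusion $LC(V)\subseteq SU(V)$, the semi-uniform decomposition into finitely many strongly prime factors, the Lemma on local algebras of essential subdirect sums, and then the strongly prime case, Theorem \ref{th LC ideal SP-JP}. The genuine problem lies in the step you yourself flag as the main obstacle, the componentwise LC-criterion. First, it should not be proved at all: the object being decomposed, $V^{-\sigma}_a$, is a Jordan \emph{algebra}, so the algebra-level result \cite[Theorem 10.3]{fgm} --- a nondegenerate Jordan algebra is Lesieur-Croisot if and only if it is an essential subdirect sum of finitely many strongly prime LC-algebras --- applies verbatim, and this is exactly what the paper invokes (together with \cite[Theorem 10.2(ii)]{fgm} and \cite[Proposition 5.6]{mt-lc}) in the paragraph preceding the statement; no pair analogue is needed. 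Second, the argument you sketch for it would not survive as written: in an essential subdirect sum, essentiality of an inner ideal is \emph{not} detected by its projections alone, and injectivity of an element does not reflect to its components by a bare projection argument --- both directions require working inside the essential ideal of the full direct sum contained in the subdirect sum, together with nondegeneracy, and in \cite{fgm} the equivalence is obtained through the singular-ideal and uniform-dimension machinery rather than through projections. Replace your hand-derivation by the citation and the step becomes sound.

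A second, smaller gap: you apply the semi-uniformity characterization to the ideal $id_V(a)$ and then say the Lemma ``identifies $V^{-\sigma}_a$'' with an essential subdirect sum; but the Lemma, applied to the pair $id_V(a)$, decomposes the local algebra of $id_V(a)$ at $a$, not $V^{-\sigma}_a$ itself. You need the bridge between these two algebras --- $\big(id_V(a)\big)^{-\sigma}_a$ sits as an essential ideal of $V^{-\sigma}_a$, and the LC property transfers across essential ideals of nondegenerate Jordan algebras, an algebra-level fact from \cite{fgm,mt-lc} analogous to Corollary \ref{corolario ideal singular}(v) for nonsingularity; alternatively, for the forward direction \cite[Theorem 10.3]{fgm} applied directly to $V^{-\sigma}_a$ already yields its decomposition into strongly prime LC factors, matched to the factors produced by the Lemma via the uniqueness of the associated minimal primes (\cite[Lemma 5.3]{mt-lc}). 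With these two points repaired by citation, your closing verification --- absorption under $Q$- and $D$-products, and also closure under sums of LC-elements, which your third paragraph omits but which follows by the same componentwise use of the fact that $LC(V_i)$ is an ideal of each strongly prime factor --- is the same formal argument as in \cite[Theorem 5.13]{mt-lc}, which is exactly how the paper concludes.
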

\begin{proof}  The proof of \cite[Theorem 5.13]{mt-lc} works here to reduce the nondegenerate case to that of strongly prime Jordan, which in turn follows from Theorem \ref{th LC ideal SP-JP}.
\end{proof}

%%%%%%%%%%%%%%%%%%%%%%%%%%%%%%%%%%%%%%%%%%%%%%%%%%%%%%%%%%%%%%%%%%%%%%%%%%%%

 \addcontentsline{toc}{chapter}{\protect\numberline{}  Bibliograf\'{\i}a.}

%%%%%%%%%%%%%%%%%%%%%%%%%%%%%%%%%%%%%%

\end{document}